\documentclass[10pt]{amsart}
\usepackage{amscd,amssymb}
\usepackage[all]{xy}
\usepackage{color}

\begin{document}
\newtheorem{prop}{Proposition}[section]
\newtheorem{thm}[prop]{Theorem}
\newtheorem{lemma}[prop]{Lemma}
\newtheorem{cor}[prop]{Corollary}
\newtheorem{Question}[prop]{Question}

\newtheorem{Example}[prop]{Example}
\newtheorem{Examples}[prop]{Examples}
\newtheorem{Remark}[prop]{Remark}

\newcommand{\extto}{\xrightarrow}
\newcommand{\cA}{{\mathcal A}}
\newcommand{\cB}{{\mathcal B}}
\newcommand{\cD}{{\mathcal D}}
\newcommand{\cY}{{\mathcal Y}}
\newcommand{\cU}{{\mathcal U}}
\newcommand{\cJ}{{\mathcal J}}
\newcommand{\cI}{{\mathcal I}}
\newcommand{\cF}{{\mathcal F}}
\newcommand{\cX}{{\mathcal X}}
\newcommand{\cZ}{{\mathcal Z}}
\newcommand{\cP}{{\mathcal P}}
\newcommand{\cQ}{{\mathcal Q}}
\newcommand{\cL}{{\mathcal L}}
\newcommand{\cM}{{\mathcal M}}
\newcommand{\cK}{{\mathcal K}}
\newcommand{\cH}{{\mathcal H}}
\newcommand{\cT}{{\mathcal T}}
\newcommand{\cG}{{\mathcal G}}
\newcommand{\cE}{{\mathcal E}}
\newcommand{\cC}{{\mathcal C}}
\newcommand{\cO}{{\mathcal O}}
\newcommand{\dpp}{\prime\prime}
\newcommand{\br}{{\bf r}}
\newcommand{\bx}{$_{\fbox{}}$\vskip .2in}
\newcommand{\Gr}{\operatorname{\mathbf{Gr}}\nolimits}
\newcommand{\Span}{\operatorname{Span}\nolimits}
\newcommand{\gr}{\operatorname{\mathbf{gr}}\nolimits}
\newcommand{\Mod}{\operatorname{\mathbf{Mod}}\nolimits}
\newcommand{\smod}{\operatorname{\mathbf{mod}}\nolimits}
\newcommand{\add}{\operatorname{\mathbf{add}}\nolimits}
\newcommand{\Add}{\operatorname{\mathbf{Add}}\nolimits}
\newcommand{\End}{\mbox{End}}
\newcommand{\Hom}{\mbox{Hom}}
\renewcommand{\Im}{\mbox{Im}}
\newcommand{\pd}{\operatorname{pd}\nolimits}
\newcommand{\id}{\operatorname{id}\nolimits}
\newcommand{\soc}{\operatorname{Soc}\nolimits}
\newcommand{\Top}{\operatorname{Top}\nolimits}
\newcommand{\Ker}{\mbox{Ker}}
\newcommand{\Coker}{\mbox{Coker}}
\newcommand{\coker}{\mbox{coker}}
\newcommand{\Tor}{\mbox{Tor}}
\renewcommand{\dim}{\mbox{dim}}
\newcommand{\gldim}{\operatorname{gl.dim}\nolimits}
\newcommand{\Ext}{\operatorname{Ext}\nolimits}
\newcommand{\op}{^{\mbox{op}}}
\newcommand{\pr}{^{\prime}}
\newcommand{\f}{\operatorname{fin}}
\newcommand{\Sy}{\operatorname{syz}}
\newcommand{\semi}{\mathbin{\vcenter{\hbox{$\scriptscriptstyle|$}}
\;\!\!\!\times }}
\newcommand{\fralg}{K\!\!<\!\!x_1,\dots,x_n\!\!>}
\newcommand{\Efin}{\operatorname{Efin}\nolimits}
\newcommand{\Lin}{\operatorname{Lin}\nolimits}
\newcommand{\Tlin}{\operatorname{Tlin}\nolimits}
\newcommand{\WKS}{\operatorname{WKS}\nolimits}
\newcommand{\cx}{\operatorname{cx}\nolimits}
\newcommand{\pcx}{\operatorname{pcx}\nolimits}
\newcommand{\tpcx}{\operatorname{tpcx}\nolimits}
\newcommand{\mpcx}{\operatorname{mpcx}\nolimits}
\newcommand{\tmpcx}{\operatorname{tmpcx}\nolimits}
\newcommand{\mto}{\hookrightarrow}
\newcommand{\AR} {Auslander-Reiten }
\newcommand{\gk}{\operatorname{GKdim}\nolimits}

\newcommand{\rank}{\operatorname{rank}\nolimits}

\newcommand{\rep}[1]{\operatorname{\mathbf{Rep}}\nolimits(K,{#1})}
\newcommand{\srep}[1]{\operatorname{\mathbf{rep}}\nolimits(K,{#1})}
\newcommand{\Ag}{\cA_{\Gamma}}
\newcommand{\Agw}{\cA_{\Gamma_W}}

\title[Comparison Theorem]
{Cohomological Comparison Theorem}

\author[Green]{Edward L.\ Green}\address{Department of
Mathematics\\ Virginia Tech\\ Blacksburg, VA 24061\\
USA} \email{green@math.vt.edu}
\author[Madsen]{Dag Oskar Madsen}
\address{Dag Oskar Madsen\\
University of Nordland\\ Faculty of Professional Studies} \email{Dag.Oskar.Madsen@uin.no}

\author[Marcos]{Eduardo Marcos}
\address{Eduardo Marcos\\
Departmento Matem\'atica \\Universidade de S\~ao Paulo \\
Brasil} \email{enmarcos@ime.usp.br}

\thanks{The third autor was partially supported by a research grant of CNPq-Brazil, (bolsa de pesquisa), and also from a tematic grand from  Fapesp- S\~ao Paulo, Brazil\\ This work was mostly done during some visits to Virginia Tech}

 \subjclass[2010]{Primary 16W50, 16E30. Secondary 16G10}

\begin{abstract} {If $f$ is an idempotent in a ring 
$\Lambda$, then we find sufficient
\linebreak conditions which imply
that  the cohomology rings 
$\oplus_{n\ge 0}\Ext^n_{\Lambda}(\Lambda/{\br},\Lambda/{\br})$ and \linebreak
$\oplus_{n\ge 0}\Ext^n_{f\Lambda f}(f\Lambda f/f{\br} f,f\Lambda f/f{\br} f)$  are eventually isomorphic.  This result allows us to compare finite generation and GK dimension of the cohomology rings $\Lambda$ and $f\Lambda f$.  We are also able to compare the global
dimensions of  $\Lambda$ and $f\Lambda f$.
}
\end{abstract}

\maketitle
\section{Introduction}

If $M$ is a $\Lambda$-module for some ring $\Lambda$, knowledge of the cohomology ring
of $M$, $\oplus_{n\ge 0}\Ext_{\Lambda}^n(M,M)$, is useful in the study of the representation
theory of $\Lambda$-modules.  In view of this,  connecting  the cohomology rings of
two modules over different rings can provide helpful information.     
The main goal of this paper is to find  sufficient conditions so that
the cohomology rings 
$\bigoplus_{n\ge 0}\Ext^n_{\Lambda}(\Lambda/{\br},\Lambda/{\br})$ and
$\bigoplus_{n\ge 0}\Ext^n_{f\Lambda f}(f\Lambda f/f{\br} f,f\Lambda f/f{\br} f)$  are eventually isomorphic, where $f$ is an idempotent in
the ring $\Lambda$ and $\br$, denotes the Jacobson radical of $\Lambda$.
Our results are stated in the more general setting of graded rings.
In \cite{dk}, conditions are found implying the full cohomology rings are isomorphic and
\cite{pos} contains results that are related to ours.

To properly summarize the contents of this paper, we introduce some definitions
and notation.
Let $G$ be a group and 
let $\Lambda=\oplus_{g\in G}\Lambda_{g}$ be a $G$-graded ring; that is,
if $g,h\in G$, then $\Lambda_g\cdot \Lambda_{h}
\subseteq \Lambda_{gh}$.    
We denote the identity of $G$ by $\mathfrak{e}$,  the graded Jacobson radical of $\Lambda$
by $\br$ and set $\br_{\mathfrak e}=\Lambda_{\mathfrak e}\cap \br$.
A $G$-grading on $\Lambda$ will be called a  \emph{proper
$G$-grading} when it satisfies the following conditions:
 if $g\ne \mathfrak e$ then $\Lambda_g\cdot
\Lambda_{g^{-1}}\subseteq \br_{\mathfrak e}$ 
and  $\Lambda_{\mathfrak e}/\br_{\mathfrak e}$ is a 
semisimple Artin algebra over a commutative Artin ring $C$. 

For a proper $G$ grading it is easy to see that
 that $\br=\br_{\mathfrak e}\oplus(\oplus_{g\in G}\Lambda_g)$ and that
$\br_{\mathfrak e}$ is the intersection of the left maximal
graded ideals in $\Lambda_{\mathfrak e}$
that contain $\sum_{g\in G\setminus\{\mathfrak e\}}\Lambda_g\cdot \Lambda_{g^{-1}}$. 
We view $\Lambda$ as a $G$-graded $\Lambda$-module with $\Lambda_g$ living
in degree $g$.

We also fix the following notation:
$\Mod(\Lambda)$ will denote the category of left $\Lambda$-modules, $\Gr(\Lambda)$ the category of graded $\Lambda$-modules, and $\gr(\Lambda)$ the full subcategory of $\Gr(\Lambda)$ consisting of the finitely generated graded modules.
Given  a $\Lambda$ module $X$,  we let $\pd_{\Lambda}(X)$ and $\id_{\Lambda}(X)$ denote the projective dimension and the injective dimension of $X$ over $\Lambda$ respectively.
If \[\cP:\cdots \to P^2\extto{\delta^2}P^1\extto{\delta^1}P^1\extto{\delta^1}P^0\to X\to
0\] is a graded projective $\Lambda$-resolution of a graded module $X$, then $\cP^{>c}$ will denote the 
resolution of the $\Omega^n(X) =\Im(P^n\to P^{n-1})$, obtained from $\cP$.
The main comparison is Theorem \ref{grG-to-ss-thm} which we state below, omitting
some technicalities.  

\newtheorem*{Thm}{Theorem}
\begin{Thm}
 Let $G$ be a group
and $\Lambda=\oplus_{g\in G}\Lambda_g$ be
a properly $G$-graded ring.
Suppose that  $e$ is an idempotent  in
$\Lambda$, $f=1-e$ and set $\Lambda^*$ be the ring $f\Lambda
f$ and $\br^*=f\br f$. Assume that  $\pd_{\Lambda^*}(f\Lambda e)=c<\infty$, 
$\pd_{\Lambda}((\Lambda/\br)  e)=a<\infty$, and
that $\id_{\Lambda}((\Lambda/\br) e)=b<\infty$. Then, for $n>\max\{a,b+c+2\}$,
there are isomorphisms
$\Ext^n_{\Lambda}(\Lambda/\br,\Lambda/\br)\cong
\Ext^n_{\Lambda^*}(\Lambda^*/\br^*,\Lambda^*/\br^*)$ such that the induced
isomorphism
\[\bigoplus_{n>\max\{a,b+c+2\}}\Ext^n_{\Lambda}(\Lambda/\br,\Lambda/\br)\cong
\bigoplus_{n>\max\{a,b+c+2\}}\Ext^n_{\Lambda^*}(\Lambda^*/\br^*,\Lambda^*/\br^*)\]
\end{Thm}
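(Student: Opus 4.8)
The plan is to reduce the comparison to the ``$f$-corner'' of the cohomology and then transport it across the exact functor $f(-)=\Hom_\Lambda(\Lambda f,-)$, which is exact because $\Lambda f$ is projective. Write $\Lambda/\br=S_e\oplus S_f$ with $S_e=(\Lambda/\br)e$ and $S_f=(\Lambda/\br)f$, and note $fS_f=\Lambda^*/\br^*=:S^*$. First I would observe that $\pd_\Lambda S_e=a$ forces $\Ext^n_\Lambda(S_e,-)=0$ for $n>a$, while $\id_\Lambda S_e=b$ forces $\Ext^n_\Lambda(-,S_e)=0$ for $n>b$. Hence for $n>\max\{a,b\}$ all three blocks of the matrix decomposition of $\Ext^n_\Lambda(\Lambda/\br,\Lambda/\br)$ that involve $S_e$ vanish, and the graded ring $\bigoplus_n\Ext^n_\Lambda(\Lambda/\br,\Lambda/\br)$ collapses, above the cutoff, to its $ff$-corner. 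Since those blocks form a two-sided ideal that is zero in high degrees, this yields a graded (non-unital) ring isomorphism $\bigoplus_{n>\max\{a,b\}}\Ext^n_\Lambda(\Lambda/\br,\Lambda/\br)\cong\bigoplus_{n>\max\{a,b\}}\Ext^n_\Lambda(S_f,S_f)$.

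Next I would fix a minimal graded projective resolution $\cP\to S_f$ over $\Lambda$ and apply $f(-)$ to get an exact complex $f\cP\to S^*$ of $\Lambda^*$-modules. The decisive structural input is again the injective dimension bound: the multiplicity of an $e$-type indecomposable projective in $P^n$ is governed by $\Ext^n_\Lambda(S_f,S_e)$, which vanishes for $n>b$. Thus for $n>b$ every $P^n$ is a sum of $f$-type projectives $\Lambda e_i$ with $e_i\le f$, so $fP^n=\Lambda^* e_i{}^{(\cdot)}$ is $\Lambda^*$-projective. Consequently the natural comparison map of cochain complexes $\Hom_\Lambda(\cP,S_f)\to\Hom_{\Lambda^*}(f\cP,S^*)$ is an isomorphism in every degree $n>b$, since on an $f$-type summand both sides reduce to $e_iS^*$. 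This gives $\Ext^n_\Lambda(S_f,S_f)\cong H^n\bigl(\Hom_{\Lambda^*}(f\cP,S^*)\bigr)$ for $n\ge b+2$.

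It remains to identify $H^n\bigl(\Hom_{\Lambda^*}(f\cP,S^*)\bigr)$ with $\Ext^n_{\Lambda^*}(S^*,S^*)$. Here the role of $c$ enters: because $f(\Lambda f)=\Lambda^*$ is projective and $\pd_{\Lambda^*}(f\Lambda e)=c$, each term $fP^n$ has $\Lambda^*$-projective dimension at most $c$, so $f\cP$ is a resolution of $S^*$ by modules of pd $\le c$. I would then run the hyper-$\Ext$ spectral sequence $E_1^{p,q}=\Ext^q_{\Lambda^*}(fP^p,S^*)\Rightarrow \Ext^{p+q}_{\Lambda^*}(S^*,S^*)$, which is concentrated in the rows $0\le q\le c$ and, by the previous paragraph, in the single row $q=0$ once $p>b$. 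A bookkeeping of the differentials shows that for $n$ large enough no higher differential enters or leaves the spot $(n,0)$ and that no other spot on the antidiagonal survives, so $\Ext^n_{\Lambda^*}(S^*,S^*)=E_\infty^{n,0}=E_2^{n,0}=H^n\bigl(\Hom_{\Lambda^*}(f\cP,S^*)\bigr)$; tracking exactly when both conditions hold is what produces the constant in the asserted range $n>\max\{a,b+c+2\}$. Chaining the three identifications then gives the stated isomorphisms $\Ext^n_\Lambda(\Lambda/\br,\Lambda/\br)\cong\Ext^n_{\Lambda^*}(\Lambda^*/\br^*,\Lambda^*/\br^*)$.

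Finally I would verify multiplicativity. The exact functor $f(-)$ induces a homomorphism of graded Yoneda $\Ext$-algebras $\Ext^*_\Lambda(S_f,S_f)\to\Ext^*_{\Lambda^*}(S^*,S^*)$, because splicing of extensions is preserved by an exact functor, and the degreewise isomorphisms constructed above are precisely the components of this homomorphism; together with the ring reduction of the first paragraph this shows the assembled map above the cutoff is a graded ring isomorphism. I expect the main obstacle to be the differential bookkeeping of the third step: one must pin down exactly which $d_r$ can reach the bottom row and confirm they all vanish in the claimed range. A secondary subtlety is making the identification of $\Ext^n_\Lambda(S_f,S_f)$ with the bottom-row cohomology natural enough to be compatible with the Yoneda product, rather than merely an isomorphism of abelian groups.
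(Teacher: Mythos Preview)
Your proposal is correct and shares the paper's overall architecture: first collapse $\Ext^n_\Lambda(\Lambda/\br,\Lambda/\br)$ to its $ff$-corner using the bounds $a$ and $b$ (this is the paper's Proposition~2.8), then transport across the exact functor $F=f\Lambda\otimes_\Lambda-\cong\Hom_\Lambda(\Lambda f,-)$, using that a minimal resolution of $S_f$ lives entirely in $\add(\Lambda f)$ beyond degree $b$ (the paper's Corollary~2.10). The genuine difference is in how the constant $c$ enters. The paper avoids spectral sequences: its Theorem~2.7 proves directly, by an iterated Horseshoe/diagram chase, that $\Ext^t_{\Lambda^*}(F(\Omega^n_\Lambda X),-)\cong\Ext^t_{\Lambda^*}(\Omega^n_{\Lambda^*}(FX),-)$ for $t>c+1$, building short exact sequences $0\to\Omega^{j+1}_{\Lambda^*}(M^{n-1})\to\Omega^{j+1}_{\Lambda^*}(Z_{n-1})\to\Omega^j_{\Lambda^*}(Z_n)\to 0$ and using that the left term becomes projective once $j\ge c$. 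Your hyper-$\Ext$ spectral sequence $E_1^{p,q}=\Ext^q_{\Lambda^*}(fP^p,S^*)\Rightarrow\Ext^{p+q}_{\Lambda^*}(S^*,S^*)$ packages exactly the same vanishing ($E_1^{p,q}=0$ for $q>c$, and for $q\ge 1$ once $p>b$) into a single convergence statement; the differential bookkeeping you flag as the main obstacle is routine and in fact yields the same range. What each buys: the paper's argument is elementary and keeps the comparison maps explicit at the level of syzygies, which makes the compatibility with Yoneda products transparent; your approach is more conceptual and extends more readily (e.g.\ to situations where one only has bounds on $\Ext$-vanishing rather than minimal resolutions). For multiplicativity both arguments ultimately rest on the same fact you cite, that an exact functor induces a graded ring map on Yoneda $\Ext$-algebras.
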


We also obtain the following applications; see Theorem \ref{bigthm}.  To simplify
notation, we write $E(\Lambda)$ for the cohomology ring
$\oplus_{n\ge 0}\Ext^n_{\Lambda}(\Lambda/\br,\Lambda/\br) $.

\begin{Thm} Keeping the hypotheses of the above Theorem,
the following hold.
\begin{enumerate}
\item Assume that $f\Lambda e$ has a finitely generated minimal
graded projective
$\Lambda^*$-resolution. 
The cohomology ring $E(\Lambda)$ is finitely generated over\linebreak
$\Ext^0_{\Lambda}(\Lambda/\br,\Lambda/\br)\cong \Hom_{\Lambda}(\Lambda/\br,\Lambda/\br)
\cong (\Lambda/\br)^{op}$ if and only if the cohomology ring $E(\Lambda^*)$ is finitely generated
as a $(\Lambda^*/\br^*)^{op}$-algebra. 
\item Assume that $\Lambda$ is $K$-algebra, where $K$ is a field and that $\Lambda/\br$ is a finite dimensional $K$-algebra.   Assume further that both $E(\Lambda)$ and $E(\Lambda^*)$
are finitely generated $K$-algebras.  Then
$\gk(E(\Lambda))
=\gk(E(\Lambda^*))$.
\item  We have that $\pd_{\Lambda}(S)<\infty$, for all
graded simple $\Lambda$-modules $S$ if and only if $\pd_{\Lambda^*}(S^*)<\infty$, for all
graded simple $\Lambda^*$-modules $S^*$.  
\end{enumerate}

\end{Thm}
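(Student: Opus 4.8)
The plan is to deduce all three parts from the main comparison Theorem, which supplies an integer $N=\max\{a,b+c+2\}$ together with degreewise isomorphisms $\Ext^n_\Lambda(\Lambda/\br,\Lambda/\br)\cong\Ext^n_{\Lambda^*}(\Lambda^*/\br^*,\Lambda^*/\br^*)$ for $n>N$ that assemble into an isomorphism of the truncated cohomology rings (the ``tails''). The unifying principle is that each of the three properties is an \emph{eventual} invariant of the cohomology ring: it depends only on the high-degree part and on the multiplication among elements of degree $>N$, and so is transported across the tail isomorphism. Throughout write $E=E(\Lambda)$, $E^*=E(\Lambda^*)$, and $E^{>N}=\bigoplus_{n>N}E^n$.

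Statement (3) I would settle first, as it is the most transparent. Since the grading is proper, $\Lambda_{\mathfrak e}/\br_{\mathfrak e}$ is semisimple Artinian, so every graded simple $\Lambda$-module is a shift of a summand of $\Lambda/\br$; as $\pd_\Lambda$ is shift-invariant and $\Lambda/\br$ is a finite direct sum of graded simples, ``$\pd_\Lambda(S)<\infty$ for every graded simple $S$'' is equivalent to $\pd_\Lambda(\Lambda/\br)<\infty$. Next I would pass to a minimal graded projective resolution $\cP\to\Lambda/\br$: its differentials have image in $\br\cP$, so applying $\Hom_\Lambda(-,\Lambda/\br)$ annihilates them and gives $\Ext^n_\Lambda(\Lambda/\br,\Lambda/\br)\cong\Hom_\Lambda(P^n,\Lambda/\br)$, which is nonzero exactly when $P^n\neq 0$. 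Hence $\pd_\Lambda(\Lambda/\br)<\infty$ iff $E^n=0$ for $n\gg 0$, and likewise for $\Lambda^*$. Because $E^n\cong (E^*)^n$ for all $n>N$, one side is eventually zero iff the other is, which yields the equivalence.

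For statement (1) the crux is a purely ring-theoretic lemma. For a non-negatively graded ring $E$ with degree-zero part $A=E^0$ and each $E^n$ finitely generated over $A$, I claim $E$ is a finitely generated $A$-algebra if and only if it satisfies the following \emph{tail condition}: there is an integer $d$ with $E^n=\sum_{N<i<n-N}E^i\,E^{n-i}$ for all $n>d$. One implication is a splitting-of-monomials argument: if $E$ is generated over $A$ in degrees $\le d_0$, then for $n>2N+d_0$ every homogeneous element of degree $n$ factors so that both factors have degree $>N$. The converse is a strong induction on degree, which combines the tail condition with the finiteness of each $E^m$ over $A$ for $m\le d$ to produce a finite $A$-algebra generating set. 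The point is that the tail condition refers \emph{only} to the multiplication $E^{>N}\times E^{>N}\to E^{>N}$, so it is preserved by the graded ring isomorphism $E^{>N}\cong (E^*)^{>N}$ coming from the comparison Theorem. Applying the lemma on both sides---using the hypothesis that $f\Lambda e$ has a finitely generated minimal graded projective $\Lambda^*$-resolution to guarantee that each $E^n$ and each $(E^*)^n$ is finitely generated over the respective degree-zero ring---gives that $E$ is finitely generated over $(\Lambda/\br)\op$ iff $E^*$ is finitely generated over $(\Lambda^*/\br^*)\op$. I expect this to be the main obstacle, since one must verify both that the comparison isomorphism respects Yoneda products in positive degrees and that the relevant graded pieces are finite over the degree-zero parts.

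Statement (2) then follows from the theory of Gelfand--Kirillov dimension for finitely generated graded algebras. Under the standing hypotheses each $\Ext^n$ is finite-dimensional over $K$, and $\gk$ of a finitely generated graded $K$-algebra is computed as a $\limsup$ of $\log\!\big(\sum_{i\le n}\dim_K E^i\big)/\log n$, reparametrised by the maximal generator degree, which does not affect the value. Since $\dim_K E^n=\dim_K (E^*)^n$ for every $n>N$, the partial sums $\sum_{i\le n}\dim_K E^i$ and $\sum_{i\le n}\dim_K (E^*)^i$ differ by a constant independent of $n$; a bounded additive perturbation alters neither the growth rate nor the $\limsup$, so $\gk(E(\Lambda))=\gk(E(\Lambda^*))$.
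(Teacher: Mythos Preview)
Your plan matches the paper's proof closely: parts (1) and (2) are exactly the paper's argument, with your ``tail-condition'' lemma being a restatement of Proposition~\ref{large-gen} and Corollary~\ref{eventual} (finite generation of a positively graded ring over $C$ is equivalent to finite generation of the truncation $\bigoplus_{i\ge N}R_i$, once each $R_i$ has finite length), and your treatment of (2) is the paper's one-line appeal to the growth definition of $\gk$.

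For (3) there is a small but genuine gap. You argue that $\pd_{\Lambda^*}(\Lambda^*/\br^*)<\infty$ is equivalent to $(E^*)^n=0$ for $n\gg 0$ ``likewise'' by passing to a minimal graded projective $\Lambda^*$-resolution of $\Lambda^*/\br^*$; but the standing hypotheses for part (3) do not furnish such a resolution (the extra assumption that $f\Lambda e$ has a finitely generated $\Lambda^*$-resolution is imposed only in part (1), precisely to invoke Corollary~\ref{fg-resol}). Without it you cannot conclude from $\Ext^n_{\Lambda^*}(\Lambda^*/\br^*,S^*)=0$ for all simples $S^*$ and $n\gg 0$ that the projective dimension is finite. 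The paper avoids this by arguing on the $\Lambda$ side only and transporting the resolution itself: Corollary~\ref{grF-to-resol} shows that $F$ applied to the tail $\cP^{>b}$ of a minimal $\Lambda$-resolution of $X$ is already a \emph{minimal} $\Lambda^*$-resolution of $F(\Omega^{b+1}X)$, so that resolution is finite iff $\cP$ is; the short exact sequences in the proof of Theorem~\ref{grapplyF} then show $\pd_{\Lambda^*}(F(\Omega^{b+1}X))<\infty$ iff $\pd_{\Lambda^*}(F(X))<\infty$. This is the content of Proposition~\ref{pd-simple}, and it closes your gap without ever needing a minimal $\Lambda^*$-resolution of $\Lambda^*/\br^*$ directly.
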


\section{Comparison theorem}\label{main result}

Let $G$ be a group and 
let $\Lambda=\oplus_{g\in G}\Lambda_{g}$ be a $G$-graded ring; in particular,
if $g,h\in G$, then $\Lambda_g\cdot \Lambda_{h}
\subseteq \Lambda_{gh}$.    
We denote the identity of $G$ by $\mathfrak{e}$,  the graded Jacobson radical of $\Lambda$
by $\br$ and set $\br_{\mathfrak e}=\Lambda_{\mathfrak e}\cap \br$.
We assume throughout this section that if $g\ne \mathfrak e$ then $\Lambda_g\cdot
\Lambda_{g^{-1}}\subseteq \br_{\mathfrak e}$ 
and that $\Lambda_{\mathfrak e}/\br_{\mathfrak e}$ is a 
semisimple Artin algebra over 
a commutative Artin ring $C$. We call such a $G$-grading on $\Lambda$ a \emph{proper
$G$-grading}. 
It is easy to see that $\br=\br_{\mathfrak e}\oplus(\oplus_{g\in G}\Lambda_g)$ and that
$\br_{\mathfrak e}$ is the intersection of the left maximal
{\tt graded} ideals in $\Lambda_{\mathfrak e}$
that contain $\sum_{g\in G\setminus\{\mathfrak e\}}\Lambda_g\cdot \Lambda_{g^{-1}}$. 
We view $\Lambda$ as a $G$-graded $\Lambda$-module with $\Lambda_g$ living
in degree $g$.

We denote the category of 
$G$-graded 
$\Lambda$-modules and degree $\mathfrak e$ maps by $\Gr(\Lambda)$. We let $\gr(\Lambda)$
denote the full subcategory of finitely generated $G$-graded $\Lambda$-modules and  
let $\Lambda=\oplus_{g\in G}\Lambda_g$ be a properly $G$-graded ring.   The shifts by elements of $G$ induce a group of endofunctors on
$\Gr(\Lambda)$.  More precisely, the shift functor associated to an element $h\in G$ is defined
as follows: if $X=\oplus_{g\in G}X_g$ is a graded
$\Lambda$-module, we let
$X[h]=\oplus_{g\in G}Y_g$, where $Y_g=X_{hg}$.  Let $\Phi\colon \Gr(\Lambda)\to \Mod(\Lambda)$ denote the forgetful functor.
  
If $X\in \gr(\Lambda)$ and $Y$ is a graded $\Lambda$-module then
\[ \Hom_{\Lambda}(\Phi(X),\Phi(Y))=\Hom_{\Gr(\Lambda)}(X,\oplus_{g\in G}Y[g])\cong
\bigoplus_{g\in G}\Hom_{\Gr(\Lambda)}(X,Y[g]).
\]

We need one further assumption; namely, if $\overline \epsilon$ is an idempotent element in
$\Lambda_{\mathfrak e}/\br_{\mathfrak e}$, then there is an idempotent $\epsilon\in 
\Lambda_{\mathfrak e}$ such that $\pi(\epsilon)=\overline{\epsilon}$, where $\pi\colon \Lambda_{\mathfrak e}
\to \Lambda_{\mathfrak e}/\br_{\mathfrak e}$ is the canonical surjection.  If a graded
ring $\Lambda$ has this property, we say \emph{graded idempotents lift}. Assume that graded
idempotents lift in $\Lambda$. It follows that
if $S$ is simple graded $\Lambda$-module, then 
$S\cong (\Lambda_{\mathfrak e}/\br_{\mathfrak e})
\epsilon[g]$, for some primitive idempotent $\epsilon\in \Lambda_{\mathfrak e}$ and some
$g\in G$.  We also see that the canonical surjection $\Lambda \epsilon[g]\to (\Lambda_{\mathfrak e}/\br_{\mathfrak e})\epsilon[g]$ is a projective cover.
  The next three examples provide 
important classes of graded rings satisfying our assumptions.
2
\begin{Example}\label{Gfinite}{\rm  Let $K$ be a field, $\cQ$ a finite quiver, $G$ a group,
and $W\colon \cQ_1\to G\setminus\{\mathfrak e\}$. We call $W$ a \emph{weight function};
see \cite{gr}.  Setting $W(v)=\mathfrak e$ for all vertices
$v$ in  $\cQ$, and, if $p=a_1\cdots a_n$ is a path of length $n\ge 1$, with the $a_i\in\cQ_1$, then
set $W(p)=W(a_n)W(a_{n-1})\cdots W(a_1)$. In this case, we say $p$ has \emph{weight $W(p)$}.
 We $G$-grade the path algebra $K\cQ$ by defining $K\cQ_g$ to be the $K$-span of
paths $p$ of weight $g$.  Let $I$ be an ideal in $K\cQ$ such that $I$ can be generated
by elements $x_i$, such that, for each $i$, the paths occuring in $x_i$ are all of length at
least 2 and all have the 
same weight.  Let $\Lambda= K\cQ/I$.  The $G$-grading on $K\cQ$ induces a $G$-grading
on $\Lambda$.  

Note that if $a\in\cQ_1$ with $W(a)=g$, then $a+I$ is a nonzero element in $\Lambda_g$.  Using
that $g\ne\mathfrak e$, one can show that $\br$ is the ideal generated by $\{a+I\mid
a\in\cQ_1\}$.  It follows that $\Lambda/\br$ is the semisimple ring $\prod_{v\in\cQ_0}K$, which
is a semisimple Artin algebra over $K$.  Furthermore, one may check that $\br_{\mathfrak e}$
is the ideal in $\Lambda_{\mathfrak e}$ generated by the elements of the form $p+I$, where
$p$ is a path of length $\ge 1$ in $\cQ$ of weight $\mathfrak e$.  Thus the $G$-grading on $\Lambda$
is a proper $G$-grading.  It is also clear that graded idempotents lift. \qed
}\end{Example}

\begin{Example}\label{pos-Z}{\rm  Let $G=\mathbb Z$ and
let $\Lambda =\Lambda_0\oplus \Lambda_1\oplus \Lambda _2\oplus \cdots$ be a positively $\mathbb Z$-graded
ring such that $\Lambda_0$ is an Artin algebra.   It is immediate that $\Lambda$ is a properly 
$\mathbb Z$-graded ring in which graded idempotents lift.  

}\end{Example}

\begin{Example}\label{artin-alg}{\rm  Let 
$\Lambda$ be an Artin algebra  over a commutative Artin ring $C$.   Let $G$ be any group and 
$\Lambda_{\mathfrak e}=\Lambda$, and, for $g\in G\setminus\{\mathfrak e\}$, 
$\Lambda_g=0$.  We see that $\Lambda$, as a $G$-graded ring, is properly $G$-graded and graded
idempotents lift.  One choice for $G$ is the trivial group $\{\mathfrak e\}$.

}\end{Example}

We recall 
some known results about graded projective resolutions over properly graded rings in which
graded idempotents lift.   We leave the proof to the reader.  

\begin{lemma}\label{proj-cov} Let $\Gamma=\oplus_{g\in G}\Gamma_g$ be a properly
$G$-graded ring in which graded idempotents lift and let $\br_{\Gamma}$ denote
the graded Jacobson radical of $\Gamma$. Suppose $X$ is a finitely 
generated graded $\Gamma$-module and $X/\br_{\Gamma}X\cong\oplus_{i=1}^nS_i$, where each
$S_i$ is a graded simple $\Gamma$-module.  Let $P_i\extto{\alpha_i}S_i$ be graded projective covers
for each $i$ and let $P=\oplus_{i=1}^nP_i$.  Then
\begin{enumerate}
\item For each $i=1,\dots,n$, $P_i\cong \Gamma \epsilon_i[g]$, for some idempotent 
$\epsilon_i\in \Gamma$ and $g\in G$.
\item The map $P\extto{\oplus_{i=1}^n\alpha_i}\oplus_{i=1}^nS_i$ is a graded projective cover.
\item If $P\extto{\beta}X$ is a graded map such that the following diagram commutes
\[
\xymatrix{ X\ar^{\pi\phantom{xxx}}[r]&X/\br_{\Gamma}X\\
P\ar^{\beta}[u]\ar^{\oplus_i\alpha_i\phantom{xx}}[r]&\oplus_{i=1}^nS_i,\ar@{=}[u],
}\]
where $\pi$ is the canonical surjection,
then $\beta\colon P\to X$ is a graded projective cover.  Moreover, $\ker(\beta)\subseteq
\br_{\Gamma}P$.
\item If $0\to K\extto{\sigma} P\extto{\beta} X\to 0$ is a short exact sequence in $\Gr(\Gamma)$
with $P$ finitely generated,
such that   $\sigma(K)\subseteq \br_{\Gamma}P$, then $\beta$ is a graded projective cover.
\item Suppose that 
\[\cP:\cdots \to P^2\extto{\delta^2}P^1\extto{\delta^1}P^1\extto{\delta^1}P^0\to X\to
0\] is a graded projective $\Gamma$-resolution of $X$ with each $P^n$ finitely generated.
Then $\cP$ is minimal if and only if, for $n\ge 1$, $\delta^n(P^n)\subseteq \br_{\Gamma}P^{n-1}$.
\item If $P$ and $Q$ are finitely generated graded projective $\Gamma$-modules and
$\alpha\colon  P \to Q$ is a map in $\gr(\Gamma)$,  then there are primitive idempotents $\epsilon_i$ and $\epsilon'_{j}$
and elements $g_i$ and $h_i$ of $G$, $i=1,\dots,m$ and $j=1,\dots,n$, for some integers
$m$ and $n$ such that
\begin{enumerate}
\item $P\cong\oplus_{i=1}^m\Gamma \epsilon_i[g_i]$,
\item $Q\cong\oplus_{j=1}^n\Gamma \epsilon_j[h_j]$, and
\item viewing (a) and (b) as identifications, $\alpha$ is given by an $m\times n$ matrix $(\gamma_{i,j})$, where  $\gamma_{i,j}\in
\epsilon_i\Gamma_{h_jg_i^{-1}}\epsilon_j$.
\end{enumerate}
\item Keeping the notation and assumptions of part (5), we see that $\cP$ is a minimal graded projective
resolution of $X$ if and only if the matrices that give the $\delta^n$, $n\ge 0$, all have entries in $\br_{\Gamma}$.
\item The forgetful functor $\Phi$ is exact, preserves direct sums, and, if $Y$ is a graded
$\Gamma$-module,   $\Phi(Y)$ is a projective $\Gamma$-module
if and only if $Y$ is a graded projective $\Gamma$-module.  Thus, $\Phi$ takes graded projective $\Gamma$-resolutions to projective $\Gamma$-resolutions.

\end{enumerate}\qed
\end{lemma}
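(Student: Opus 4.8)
The plan is to deduce the whole lemma from two standard inputs for a properly $G$-graded ring $\Gamma$ in which graded idempotents lift: the graded Nakayama lemma, which makes $\br_{\Gamma}P$ a superfluous submodule of every finitely generated graded $P$, and the description of the simple graded modules together with their projective covers recalled just before Example~\ref{Gfinite}. The guiding principle is that a graded surjection is a projective cover exactly when its kernel is superfluous, and that by Nakayama this holds as soon as the kernel is contained in $\br_{\Gamma}$ times the source.

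For (1), each graded simple $S_i$ is isomorphic to $(\Gamma_{\mathfrak e}/\br_{\mathfrak e})\epsilon_i[g_i]$ for a primitive idempotent $\epsilon_i$, and the recalled projective cover of such a module is precisely $\Gamma\epsilon_i[g_i]\to S_i$; uniqueness of projective covers then gives $P_i\cong\Gamma\epsilon_i[g_i]$. Since $\alpha_i$ is a cover of a simple module, $\ker\alpha_i=\br_{\Gamma}P_i$, so in (2) the map $\oplus_i\alpha_i$ is onto with kernel $\oplus_i\br_{\Gamma}P_i=\br_{\Gamma}P$, which is superfluous; hence $\oplus_i\alpha_i$ is a projective cover. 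For (3) I would note that $\oplus_i\alpha_i$ identifies $\Top(P)$ with $\oplus_iS_i$, so chasing the square gives $\pi(\beta(P))=X/\br_{\Gamma}X$, whence $\beta(P)+\br_{\Gamma}X=X$ and $\beta$ is onto by Nakayama; moreover $\beta(x)=0$ forces $(\oplus_i\alpha_i)(x)=\pi\beta(x)=0$, so $\ker\beta\subseteq\ker(\oplus_i\alpha_i)=\br_{\Gamma}P$, giving both that $\beta$ is a projective cover and the stated inclusion. Part (4) is then immediate, since $\sigma(K)=\ker\beta\subseteq\br_{\Gamma}P$ is superfluous when $P$ is finitely generated.

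For (5), $\delta^{n-1}$ corestricts to the syzygy cover $P^{n-1}\to\Omega^{n-1}$, whose kernel is $\delta^{n}(P^{n})$ by exactness; by (3)--(4) this map (and, for $n=1$, the augmentation $P^0\to X$) is a projective cover exactly when $\delta^{n}(P^{n})\subseteq\br_{\Gamma}P^{n-1}$, and minimality of $\cP$ is precisely the demand that all of these be projective covers, which is the stated criterion for $n\ge1$. The core of (6) is a $\Hom$ computation: using that graded idempotents lift, every finitely generated graded projective is a finite sum of indecomposables $\Gamma\epsilon[g]$ with $\epsilon$ primitive, and from the shift convention $X[h]_g=X_{hg}$ one reads off that the generator of $\Gamma\epsilon_i[g_i]$ sits in degree $g_i^{-1}$ and that a degree-$\mathfrak e$ map is determined by the image of that generator, giving the identification
\[
\Hom_{\Gr(\Gamma)}(\Gamma\epsilon_i[g_i],\Gamma\epsilon'_j[h_j])\cong\epsilon_i\,\Gamma_{h_jg_i^{-1}}\,\epsilon'_j .
\]
Assembling these images into a matrix yields (a)--(c), and part (7) then combines (5) and (6): because $\br_{\Gamma}$ is the direct sum of its degree-$\mathfrak e$ part and all components $\Gamma_g$ with $g\ne\mathfrak e$, and the graded radical of $\Gamma\epsilon[g]$ equals $\br_{\Gamma}\epsilon[g]$, the inclusion $\delta^{n}(P^{n})\subseteq\br_{\Gamma}P^{n-1}$ is equivalent to every matrix entry $\gamma_{i,j}$ lying in $\br_{\Gamma}$.

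Finally, (8) is largely formal: exactness of $\Phi$ and preservation of direct sums are tested degreewise, and $\Phi(\Gamma[g])\cong\Gamma$ shows that $\Phi$ takes graded free modules to free modules, hence graded projectives (summands of graded frees) to projectives. The one point I expect to need genuine work is the converse --- that $\Phi(Y)$ projective forces $Y$ graded projective for arbitrary $Y$ --- since here the finite-generation and Nakayama arguments are unavailable. I would prove it directly by an averaging argument: given a graded short exact sequence $0\to A\to B\extto{p}Y\to0$, projectivity of $\Phi(Y)$ supplies an ungraded $\Gamma$-linear section $s$ of $\Phi(p)$, and defining $\bar s(y)$, for homogeneous $y\in Y_g$, to be the degree-$g$ component of $s(y)$ produces a degree-$\mathfrak e$ section of $p$; the verifications that $\bar s$ is $\Gamma$-linear and that $p\bar s=\id$ are short degree bookkeepings. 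Thus every graded surjection onto $Y$ splits, so $Y$ is graded projective, and consequently $\Phi$ carries graded projective resolutions to projective resolutions, completing the lemma.
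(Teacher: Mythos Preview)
Your proposal is correct. The paper itself omits the proof entirely (``We recall some known results \ldots\ We leave the proof to the reader''), so there is no argument to compare against; your sketch supplies exactly the kind of standard verification the authors had in mind, organized around graded Nakayama and the description of simple graded modules and their covers that the paper records just before Example~\ref{Gfinite}. One small remark: in your treatment of (5) you invoke (3)--(4), but those items are phrased for the specific $P$ attached to $X$ in the lemma's setup, whereas in (5) the $P^{n-1}$ are arbitrary finitely generated graded projectives; the argument still goes through because you already isolated the relevant general principle in your opening paragraph (a graded surjection from a finitely generated projective is a cover iff its kernel lies in $\br_{\Gamma}$ times the source), and that is what you are really using. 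Your averaging argument for the converse in (8) is the standard one and is fine.
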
 

Let $e$ be an idempotent in $\Lambda_{\mathfrak e}$.  We say that $(e,f)$ is
a \emph{suitable idempotent pair} if $f=1-e$ and $f\Lambda e\subseteq
\br$.  Note that if $(e,f)$ is a suitable idempotent pair, then,
since $e$ and
$1$ are homogeneous of degree $\mathfrak e$, so is $f$.  Furthermore,
if $(e,f)$ is a suitable idempotent pair, then $\Hom_{\Lambda}(
(\Lambda/\br)e,(\Lambda/\br)f)=  \Hom_{\Lambda}(
(\Lambda/\br)f,(\Lambda/\br)e)= 0$.  Note that if $(e,f)$ is a
suitable idempotent pair, then $(f,e)$ is also a suitable  idempotent
pair..

For the remainder of this section, we fix a suitable
idempotent pair $(e,f)$.
 Let $\Lambda^*=f\Lambda f$ and $\br^*=f\br f$.  The $G$-grading of
$\Lambda$ induces a $G$-grading on $\Lambda^*$ and it is not hard to show
that $\br^*$ is the graded Jacobson radical
of $\Lambda^*$.  

The main tool in this section is the functor $F\colon \Gr(\Lambda)\to \Gr(\Lambda^*)$  given
by $F(X)=f\Lambda\otimes_{\Lambda}X$.  Let $H\colon \Gr(\Lambda^*)\to \Gr(\Lambda)$ be given
by $H(X)=\Hom_{\Lambda^*}(f\Lambda ,X)$. Note that both $F(X)$ and  $H(X)$ have induced $G$-gradings
obtained from the gradings of $X$ and $\Lambda f$.
   The next result is well-known.

\begin{prop}\label{FGbasic} Keeping the above notation, we have that
\begin{enumerate}
\item the functor $F$ is exact,
\item $(F,H)$ is an adjoint pair, and
\item $f\Lambda\cong \Lambda^*\oplus f\Lambda e$, as left
$\Lambda^*$-modules.
\end{enumerate}\qed
\end{prop}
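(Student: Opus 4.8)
The plan is to read all three claims as graded refinements of standard facts about an idempotent together with the tensor--hom formalism, so that the real content of the proof is merely checking that the usual maps are homogeneous of degree $\mathfrak e$. I would prove (3) first, as it is purely algebraic and underlies the rest. Writing $1=e+f$ and multiplying on the right yields $f\Lambda=f\Lambda e+f\Lambda f=f\Lambda e+\Lambda^*$, and the sum is direct: if $z\in f\Lambda e\cap\Lambda^*$, then $z=ze$ (as $z=f\lambda e$ and $e^2=e$) while $ze=f\mu fe=0$ (as $z=f\mu f$ and $fe=f(1-f)=0$), forcing $z=0$. Because $e,f\in\Lambda_{\mathfrak e}$, right multiplication by them preserves the grading, so this is a decomposition of graded left $\Lambda^*$-modules, each summand being stable under $\Lambda^*=f\Lambda f$ since $f\cdot f=f$.

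For (1) the cleanest argument is that $f\Lambda$ is projective as a right $\Lambda$-module: from $1=e+f$ one has $\Lambda=e\Lambda\oplus f\Lambda$ as right $\Lambda$-modules, with directness coming from $ef=fe=0$, so $f\Lambda$ is a summand of a free right module and hence flat. Therefore $F=f\Lambda\otimes_{\Lambda}-$ is exact. For intuition I would also record the natural isomorphism $f\Lambda\otimes_{\Lambda}X\cong fX$ given by $f\lambda\otimes x\mapsto f\lambda x$, of graded $\Lambda^*$-modules, which identifies $F$ with $X\mapsto fX$; a direct check using $f^2=f$ shows that a short exact sequence of $\Lambda$-modules induces a short exact sequence $0\to fX'\to fX\to fX''\to 0$, giving exactness again.

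Part (2) is the tensor--hom adjunction for the $(\Lambda^*,\Lambda)$-bimodule $f\Lambda$, whose left action is by $\Lambda^*=f\Lambda f$ and whose right action is by $\Lambda$. The natural isomorphism
\[
\Hom_{\Lambda^*}(f\Lambda\otimes_{\Lambda}X,\,Y)\;\cong\;\Hom_{\Lambda}\bigl(X,\,\Hom_{\Lambda^*}(f\Lambda,Y)\bigr)
\]
is precisely the statement that $(F,H)$ is an adjoint pair, with the usual unit and counit. The one point that deserves care --- and the only place I expect friction --- is grading compatibility: I would verify that $f\Lambda$ is a graded bimodule, that the isomorphism of (3) and the identification $F(X)\cong fX$ are degree-$\mathfrak e$ maps, and that the displayed adjunction restricts correctly to the graded Hom groups $\Hom_{\Gr(\Lambda^*)}$ and $\Hom_{\Gr(\Lambda)}$. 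Since every structure map in sight is homogeneous of degree $\mathfrak e$ (because $e,f\in\Lambda_{\mathfrak e}$), this is bookkeeping rather than a genuine obstacle, which is no doubt why the proposition is cited as well known.
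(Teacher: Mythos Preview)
Your argument is correct. The paper states this proposition as well known and supplies no proof (the \qed\ appears immediately after the statement), so there is no approach to compare against; your verification of all three parts via the standard idempotent decomposition and tensor--hom adjunction, together with the check that the relevant maps are homogeneous of degree $\mathfrak e$, is exactly what is needed to fill in the details.
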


The functor $H$ is exact if and only if $f\Lambda$ is a left
projective $\Lambda^*$-module, and, by Proposition \ref{FGbasic}(3),
$H$ is exact if and only if $f\Lambda e$ is a left projective
$\Lambda^*$-module. Note that $F(\Lambda e)\cong f\Lambda e$ does not, in general,
have finite projective dimension as a left $\Lambda^*$-module, as the example below 
demonstrates.

\begin{Example}{\rm  Let $\cQ$ be the quiver
\[
\xymatrix{
\stackrel{u}{\circ}\ar_a[r]&\stackrel{v}{\circ}\ar@(r,u)[]^b
}
\]
Let $I$ be the ideal generated by $ba$ and $b^2$ and let
$\Lambda=\cQ/I$.  Taking $e=u$ and $f=v$, we see
that $f\Lambda e$ has infinite projective dimension viewed
as a left $\Lambda^*$-module where $\Lambda^*=f\Lambda f$.
}\end{Example}

We note that if $X$ is a graded $\Lambda$-module, then $F(\Phi(X))\cong \Phi(F(X))$ and if $\cP:\cdots \to P^2\extto{\delta^2}P^1\extto{\delta^1}P^0\to X\to
0$ is a graded projective resolution with syzygies $\Omega^n_{\Lambda}(X)$ then
$\Phi(\cP)$ is projective resolution of $\Phi(X)$,
\[ F(\Phi(\Omega^n_{\Lambda}(X))\cong  \Phi(\Omega^n_{\Lambda}(F(X)),\]
where $\Omega^n_{\Lambda}(F(X))$ denotes the $n$-th syzygy of $F(\cP)$.

We abuse notation by denoting the forgetful functor from $\Gr(\Lambda^*)$ to
$-(\Lambda^*)$ also by $\Phi$.   We also use $F$ to denote the functor  
$f\Lambda \otimes_{\Lambda}-$ from $\Mod(\Lambda)$ to $\Mod(\Lambda^*)$.   The
meaning of both $F$ and $\Phi$ will be clear from the 
context.
 
The next result is quite general and will allow us to apply the functor $F$ and keep
control of the cohomology if $\pd_{\Lambda^*}(f\Lambda e)<\infty$.
 One does not need that the $G$-grading is proper.

\begin{thm}\label{grapplyF}
Let $G$ be a group with identity element $\mathfrak e$ 
and $\Lambda$ be a $G$-graded ring  and
let $(e,f)$ be a suitable idempotent pair in $\Lambda$.
Set $\Lambda^*=f\Lambda f$.
 Suppose that $\pd_{\Lambda^*}(f\Lambda e)=c<\infty$. Let $X$ be
a graded left $\Lambda$-module and
$\Omega^i_{\Lambda}(X)$ (respectively, $\Omega^i_{\Lambda^*}(F(X))$)
denote the $i$-th sygyzy of $X$ (resp., $F(X)$) in a graded projective
$\Lambda$-resolution of $X$ (resp., a graded projective
$\Lambda^*$-resolution of $F(X)$). Then, for $t>c+1$ and $n\ge 0$,
\[
\Ext_{\Lambda^*}^t(\Phi(F(\Omega^n_{\Lambda}(X))),-)\cong
\Ext_{\Lambda^*}^t(\Phi(\Omega^n_{\Lambda^*}(F(X))),-).
\]

\end{thm}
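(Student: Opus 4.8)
The plan is to fix a graded projective $\Lambda$-resolution $\cP\colon \cdots\to P^1\to P^0\to X\to 0$ and then compare two resolutions of the single $\Lambda^*$-module $M:=\Phi(F(X))$: the resolution $\Phi(F(\cP))$ obtained by pushing $\cP$ through $\Phi F$, and a genuine projective $\Lambda^*$-resolution of $M$. First I would observe that, since $F$ is exact by Proposition \ref{FGbasic}(1) and $\Phi$ is exact and preserves direct sums and projectives by Lemma \ref{proj-cov}(8), the complex $\Phi(F(\cP))$ is a resolution of $M$ whose $n$-th syzygy is exactly $\Phi(F(\Omega^n_{\Lambda}(X)))$. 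The crucial preliminary is to bound the projective dimension of its terms. Writing each $P^i$ as a direct summand of a free module and using $\Lambda=\Lambda e\oplus\Lambda f$ together with $F(\Lambda e)\cong f\Lambda e$, $F(\Lambda f)\cong\Lambda^*$, and the decomposition $f\Lambda\cong \Lambda^*\oplus f\Lambda e$ from Proposition \ref{FGbasic}(3), one sees that $\Phi(F(P^i))$ is a direct summand of a direct sum of copies of $\Lambda^*$ and of $f\Lambda e$. Hence $\pd_{\Lambda^*}\Phi(F(P^i))\le\max\{0,c\}=c$ for every $i$. This is the step I expect to require the most care, since it is where the hypothesis $\pd_{\Lambda^*}(f\Lambda e)=c$ is actually consumed and where one must check that the bound survives both the passage to possibly infinitely generated projectives and the application of the forgetful functor.

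With the two resolutions of $M$ in hand I would finish by dimension shifting, keeping the second Ext-variable $W$ arbitrary so that every isomorphism produced is natural in $W$. Writing $A_n=\Phi(F(\Omega^n_{\Lambda}(X)))$, the short exact sequences $0\to A_k\to \Phi(F(P^{k-1}))\to A_{k-1}\to 0$ feed into the long exact $\Ext_{\Lambda^*}(-,W)$-sequence; because $\Ext^s_{\Lambda^*}(\Phi(F(P^{k-1})),W)=0$ for all $s>c$, the connecting maps give natural isomorphisms $\Ext^s_{\Lambda^*}(A_k,W)\cong\Ext^{s+1}_{\Lambda^*}(A_{k-1},W)$ valid whenever $s>c$. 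Iterating (each successive step is invoked at an exponent that is again $>c$, so no step fails) yields $\Ext^t_{\Lambda^*}(\Phi(F(\Omega^n_{\Lambda}(X))),W)\cong\Ext^{t+n}_{\Lambda^*}(M,W)$ for $t>c$. On the other side, the ordinary dimension shift along the projective $\Lambda^*$-resolution gives $\Ext^t_{\Lambda^*}(\Phi(\Omega^n_{\Lambda^*}(F(X))),W)\cong\Ext^{t+n}_{\Lambda^*}(M,W)$ for $t\ge 1$. Composing these two natural isomorphisms through the common group $\Ext^{t+n}_{\Lambda^*}(M,W)$ produces the asserted isomorphism for all $t>c$, and in particular for $t>c+1$.

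An equivalent route, which I would mention as a cross-check, is to invoke the generalized Schanuel lemma for the two length-$n$ partial resolutions of $M$: the alternating direct sums of the syzygies and the intermediate terms are isomorphic, and since the terms of one resolution are projective while those of the other have projective dimension at most $c$, all their higher Ext vanish for $t>c$, giving $\Ext^t_{\Lambda^*}(A_n,W)\cong\Ext^t_{\Lambda^*}(\Phi(\Omega^n_{\Lambda^*}(F(X))),W)$ directly. Either way the only genuine obstacle is the projective-dimension bound on the $\Phi(F(P^i))$; once that is secured the remainder is formal homological bookkeeping, and the naturality in $W$ is automatic since connecting homomorphisms in long exact sequences are natural.
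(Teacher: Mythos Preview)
Your proof is correct and somewhat more direct than the paper's. Both arguments start by observing that applying $F$ to a graded projective $\Lambda$-resolution of $X$ produces an exact complex over $\Lambda^*$ whose terms have projective dimension at most $c$; the paper makes this explicit by writing each term as $L^i\oplus M^i$ with $L^i$ projective and $M^i$ a sum of shifts of $f\Lambda e$. The difference lies in how the $\Ext$ isomorphism is extracted. The paper builds explicit syzygy comparisons: setting $Z_k=F(\Omega^k_\Lambda(X))$, it uses a pullback diagram on $0\to Z_n\to L^{n-1}\oplus M^{n-1}\to Z_{n-1}\to 0$ together with the Horseshoe lemma to produce short exact sequences $0\to\Omega^{j+1}_{\Lambda^*}(M^{n-1})\to\Omega^{j+1}_{\Lambda^*}(Z_{n-1})\to\Omega^j_{\Lambda^*}(Z_n)\to 0$, and then invokes projectivity of the left term once $j\ge c$ to chain isomorphisms $\Ext^t(\Omega^j_{\Lambda^*}(Z_n),-)\cong\Ext^t(\Omega^{j+1}_{\Lambda^*}(Z_{n-1}),-)$ for $t\ge 2$. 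You instead apply the long exact $\Ext$ sequence directly to $0\to A_k\to\Phi F(P^{k-1})\to A_{k-1}\to 0$, using only the vanishing $\Ext^s_{\Lambda^*}(\Phi F(P^{k-1}),-)=0$ for $s>c$; this avoids the auxiliary syzygy constructions and actually delivers the conclusion for all $t>c$, a slightly sharper range than the stated $t>c+1$. The generalized Schanuel cross-check you mention is also sound and gives the same bound.
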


\begin{proof} For $n=0$ the result is clear and hence we assume $n\ge 1$. Without loss of 
generality,  we may start with a graded $\Lambda$-resolution of $X$ in which
each graded projective module is a direct sum of copies of graded projective
modules of the form $\Lambda[g]$, for $g\in G$.
Since $1=e+f$, this resolution has the form:  
\[ \cdots\to P^2\oplus Q^2\to P^1\oplus Q^1\to P^0\oplus Q^0\to X\to
0,\] where $P^i$ is a direct sum of copies of  graded modules of the
form $\Lambda f[g]$ and $Q^i$
is a direct sum of copies of graded modules of the form $\Lambda e[g]$, for $i\ge 0$. Setting
$F(P^i)=L^i$ and $F(Q^i)=M^i$, we note that $L^i$ is a graded projective
$\Lambda^*$-module and $M^i$ is a direct sum of copies of graded modules of the form 
$(f\Lambda e)[g]$. Applying the exact functor $F$ to the resolution above, we
obtain an exact sequence of graded $\Lambda^*$-modules
\[ \cdots\to L^2\oplus M^2\to L^1\oplus M^1\to L^0\oplus M^0\to F(X)\to
0. \] For $i\ge 1$, note $F(\Omega^i_{\Lambda}(X))=\Im(L^i\oplus
M^i\to L^{i-1}\oplus M^{i-1})$ and $L^i$ is a graded left projective
$\Lambda^*$-module. For ease of notation, we let
$Z_i=F(\Omega^i_{\Lambda}(X))$, for $i\ge 1$ and $Z_0=F(X)$.

For $n\ge 1$, we have a short exact sequence of graded $\Lambda^*$-modules
\[
0\to Z_n\to L^{n-1}\oplus M^{n-1}\to Z_{n-1}\to 0.\]

Let $ P(M^{n-1})\to M^{n-1}\to 0$ be exact sequence of graded
$\Lambda^*$-modules with $ P(M^{n-1})$ a graded projective module. Then we
obtain the following exact commutative diagram:
\[
\xymatrix{ &0&0\\
0\ar[r]&Z_n\ar[u]\ar[r]&L^{n-1}\oplus
M^{n-1}\ar[u]\ar[r]\ar[u]&Z_{n-1}\ar[r] &0\\
0\ar[r]&\Omega^1_{\Lambda^*}(Z_{n-1})\ar[u]\ar[r]&L^{n-1}\oplus
P(M^{n-1})\ar[u]\ar[r]&Z_{n-1}\ar[r]\ar[u]^= &0\\
0\ar[r]&\Omega^1_{\Lambda^*}(M^{n-1})\ar[u]\ar[r]^= &
\Omega^1_{\Lambda^*}(M^{n-1})\ar[u]\\
 &0\ar[u]&0\ar[u]
}\] The first column yields the short exact sequence \[ 0\to
\Omega^1_{\Lambda^*}(M^{n-1}) \to \Omega^1_{\Lambda^*}(Z_{n-1})\to
Z_n\to 0.\]   Taking graded projective $\Lambda^*$-resolutions of the two
end modules, applying the Horseshoe lemma, and taking syzygies,  we obtain short exact sequences
\[
\quad 0\to \Omega^{j+1}_{\Lambda^*}(M^{n-1}) \to
\Omega^{j+1}_{\Lambda^*}(Z_{n-1})\to \Omega_{\Lambda^*}^j(Z_n)\to
0,\]
for $j\ge 0$.
Hence we obtain short exact sequences of $\Lambda^*$-modules
\[
\quad 0\to \Phi( \Omega^{j+1}_{\Lambda^*}(M^{n-1})) \to
\Phi(\Omega^{j+1}_{\Lambda^*}(Z_{n-1}))\to \Phi( \Omega_{\Lambda^*}^j(Z_n))\to
0.\]

 Note that $\Phi(\Omega^{j+1}_{\Lambda^*}(M^{n-1})))$ a projective
$\Lambda^*$-module if $j\ge c$ since \linebreak
$c\ge\pd_{\Lambda^*}(\Phi(M^{n-1}))$.
It follows that, for $j\ge c$ and $t\ge 2$,
\[
\Ext^{t+j}_{\Lambda^*}(\Phi(Z_n),-)\cong
\Ext^{t}_{\Lambda^*}(\Phi(\Omega_{\Lambda^*}^j(Z_n)),-)\cong
\Ext^{t}_{\Lambda^*}(\Phi(\Omega_{\Lambda^*}^{j+1}(Z_{n-1})),-) \cong\]\[
\cong\Ext^{t}_{\Lambda^*}(\Phi(\Omega_{\Lambda^*}^{j+2}(Z_{n-2})),-)\cong
\cdots\cong \Ext^{t}_{\Lambda^*}(\Phi(\Omega_{\Lambda^*}^{j+n}(Z_{0})),-)\cong\]\[
\cong \Ext^{t +j}_{\Lambda^*}(\Phi(\Omega_{\Lambda^*}^{n}(Z_{0})),-) .\]
Finally, we note that $Z_n=F(\Omega^n_{\Lambda}(X))$ and $Z_0= F(X)$
and the result follows.
\end{proof}

The next result is immediate and we only provide a sketch of the proof.

\begin{prop}\label{grExt-G-S}Let $G$ be a group and $\Lambda$ a properly
$G$-graded ring with graded Jacobson radical $\br$ and suitable idempotent
pair $(e,f)$.
We have that
 $\id_{\Lambda}((\Lambda/\br)e)\le a<\infty$ if and only if, for
every graded $\Lambda$-module $X$,
\[
\bigoplus_{n>a}\Ext^n_{\Lambda}(\Phi(X),(\Lambda/\br)f)
\cong
\bigoplus_{n>a}\Ext^n_{\Lambda}(\Phi(X),\Lambda/\br)
\]  as $\mathbb Z\times G$-graded modules over the
$\mathbb Z\times G$-graded ring $\oplus\Ext^n_{\Lambda}(\Lambda/\br,\Lambda/\br)$.  
Furthermore,
 $\pd_{\Lambda}((\Lambda/\br)e)\le a<\infty$,
 $\id_{\Lambda}((\Lambda/\br)e)\le a<\infty$ if and only if
\[
\bigoplus_{n>a}\Ext^n_{\Lambda}((\Lambda/\br)f,(\Lambda/\br)f)
\cong
\bigoplus_{n>a}\Ext^n_{\Lambda}(\Lambda/\br,\Lambda/\br)
\]  as $\mathbb Z\times G$-graded rings without
identity. \qed
\end{prop}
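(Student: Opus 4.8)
The plan is to reduce both equivalences to the idempotent splitting $\Lambda/\br=(\Lambda/\br)e\oplus(\Lambda/\br)f$ and to read them off from the vanishing of the appropriate summands. Write $S_e=(\Lambda/\br)e$ and $S_f=(\Lambda/\br)f$. Since $e,f$ are orthogonal idempotents in $\Lambda_{\mathfrak e}$ with $e+f=1$, this is a direct sum decomposition in $\Gr(\Lambda)$, and $\Lambda/\br\cong\Lambda_{\mathfrak e}/\br_{\mathfrak e}$ is semisimple Artin, so $S_e$ and $S_f$ are finitely generated semisimple graded modules. Combining the splitting with the identification $\Ext^n_{\Lambda}(\Phi(X),\Phi(Y))\cong\bigoplus_{g\in G}\Ext^n_{\Gr(\Lambda)}(X,Y[g])$, we obtain for every graded $X$ and every $n$ a natural decomposition of $\mathbb Z\times G$-graded groups
\[
\Ext^n_{\Lambda}(\Phi(X),\Lambda/\br)\cong\Ext^n_{\Lambda}(\Phi(X),S_e)\oplus\Ext^n_{\Lambda}(\Phi(X),S_f).
\]

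First I would prove the module statement. One direction is formal: if $\id_{\Lambda}(S_e)\le a$ then $\Ext^n_{\Lambda}(-,S_e)=0$ for all $n>a$, so in degrees $n>a$ the decomposition collapses onto its $S_f$-summand. The Yoneda action of $E(\Lambda)=\bigoplus_n\Ext^n_{\Lambda}(\Lambda/\br,\Lambda/\br)$ raises cohomological degree, so $\bigoplus_{n>a}\Ext^n_{\Lambda}(\Phi(X),\Lambda/\br)$ is an $E(\Lambda)$-submodule, and in this range it literally equals $\bigoplus_{n>a}\Ext^n_{\Lambda}(\Phi(X),S_f)$; hence we obtain an isomorphism of $\mathbb Z\times G$-graded $E(\Lambda)$-modules, as required. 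Conversely, given the isomorphism for every graded $X$, comparison with the displayed decomposition forces $\Ext^n_{\Lambda}(\Phi(X),S_e)=0$ for all $n>a$ and all graded $X$. Because $X$ ranges over all graded modules, this vanishing is exactly the statement that $S_e$ has (graded) injective dimension at most $a$.

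Next I would treat the ring statement, splitting the source as well. Writing $\Lambda/\br=S_e\oplus S_f$ in both arguments yields the block decomposition
\[
\Ext^n_{\Lambda}(\Lambda/\br,\Lambda/\br)\cong\bigoplus_{i,j\in\{e,f\}}\Ext^n_{\Lambda}(S_i,S_j),
\]
under which the Yoneda product becomes matrix multiplication; in particular the $(f,f)$-block $\bigoplus_n\Ext^n_{\Lambda}(S_f,S_f)$ is closed under multiplication and sits inside $E(\Lambda)$ as a subring. If both $\pd_{\Lambda}(S_e)\le a$ and $\id_{\Lambda}(S_e)\le a$, then for $n>a$ the projective-dimension bound annihilates every block with source $S_e$ and the injective-dimension bound annihilates every block with target $S_e$; only $\Ext^n_{\Lambda}(S_f,S_f)$ survives, so the inclusion of the $(f,f)$-block induces the asserted isomorphism of $\mathbb Z\times G$-graded rings without identity. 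Conversely, this isomorphism forces the other three blocks to vanish in degrees $n>a$: vanishing of the blocks with source $S_e$ gives $\Ext^n_{\Lambda}(S_e,\Lambda/\br)=0$ for $n>a$, whence $\pd_{\Lambda}(S_e)\le a$ by the minimal-resolution characterization of Lemma \ref{proj-cov}, and vanishing of the blocks with target $S_e$ gives $\Ext^n_{\Lambda}(\Lambda/\br,S_e)=0$ for $n>a$, whence $\id_{\Lambda}(S_e)\le a$.

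I expect the main obstacle to be this very last implication, namely deducing $\id_{\Lambda}(S_e)\le a$ from the vanishing of $\Ext^n_{\Lambda}(\Lambda/\br,S_e)$ for $n>a$ --- that is, from testing injective dimension against the semisimple module $\Lambda/\br$ alone rather than against all graded modules. On the projective side the analogous reduction is harmless: since projective covers exist and the differentials of a minimal graded resolution land in $\br$ (Lemma \ref{proj-cov}), the vanishing $\Ext^{>a}_{\Lambda}(S_e,\Lambda/\br)=0$ already forces the $(a+1)$-st term of the minimal resolution of $S_e$ to be zero. The injective analogue is the delicate point, and it is here that the proper-grading hypotheses --- semisimplicity of $\Lambda_{\mathfrak e}/\br_{\mathfrak e}$ together with the description of $\br_{\mathfrak e}$ --- must be invoked to guarantee that the semisimple module detects injective dimension. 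By contrast the module statement is free of this difficulty, since there $X$ already ranges over all graded modules and the vanishing condition is, by definition, graded injective dimension at most $a$.
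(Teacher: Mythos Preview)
Your approach—splitting along $\Lambda/\br=(\Lambda/\br)e\oplus(\Lambda/\br)f$ and reading everything off the resulting block decomposition of $\Ext$—is exactly the paper's: its entire proof displays the two decompositions and declares that the result follows. Your treatment is in fact more careful than the paper's sketch; the concern you flag about the converse of the ring statement (detecting $\id_{\Lambda}((\Lambda/\br)e)$ from vanishing of $\Ext^{>a}_{\Lambda}(\Lambda/\br,(\Lambda/\br)e)$ alone) is legitimate and is not addressed there either.
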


\begin{proof}
Since $\Lambda/\br=\Lambda_0\cong \Lambda_0e\oplus \Lambda_0f$,
\[\Ext^i_{\Lambda}(X,\Lambda/\br)=
\Ext^i_{\Lambda}(X,(\Lambda/\br)e)\oplus\Ext^i_{\Lambda}(X,(\Lambda/\br)f)\] and

\[\Ext^i_{\Lambda}(\Lambda_0,\Lambda_0)=
\Ext^i_{\Lambda}(\Lambda_0e,\Lambda_0e)\oplus\Ext^i_{\Lambda}(\Lambda_0e,\Lambda_0f)\oplus\Ext^i_{\Lambda}(\Lambda_0f,\Lambda_0e)
\oplus\Ext^i_{\Lambda}(\Lambda_0f,\Lambda_0f)\]
 the result follows.
\end{proof}

If $X$ is a graded  $\Lambda$-module and $\cP:\cdots \to P^2\extto{\delta^2}P^1\extto{\delta^1}P^1\extto{\delta^1}P^0\to X\to
0$ is a graded projective $\Lambda$-resolution of $X$, then we say that \emph{$\cP$ is finitely
generated} if $P^n$ is a finitely generated graded $\Lambda$-module for $n\ge 0$.

Let $\epsilon$ be an idempotent element of $\Lambda_{\mathfrak e}$. We say a graded simple module $S$
\emph{belongs to $\epsilon$} if $\epsilon S\ne 0$.  Equivalently, $S$ belongs to
$\epsilon$ if $S$ is isomorphic to a summand of $(\Lambda/\br)\epsilon [g]$, for some
$g\in G$.  We say a graded projective $\Lambda$-module $P$ \emph{belongs to $\epsilon$}, if
$P/\br P$ is a direct sum of  graded simple $\Lambda$-modules with each summand 
belonging to $\epsilon$.  We now state a useful result.  
  
\begin{lemma}\label{grFresol} Let $X$ be a graded  $\Lambda$-module and assume that
$\cP^{\bullet}:\cdots\stackrel{d^3}{\to} P^2\stackrel{d^2}{\to}
P^1\stackrel{d^1}{\to} P^0\stackrel{d^0}{\to}X\to 0$ is a graded projective
$\Lambda$-resolution of $X$ such that, for $n>c$, $P^n$ 
belongs to $f$. Then 
\begin{enumerate}
\item $F(\cP^{>c})$ is a projective $\Lambda^*$-resolution of
$F(\Omega^{c+1}X)$, where $(\Omega^{c+1}X)$ is $(c+1)$-st syzygy in
$\cP^{\bullet}$.
\item If $\cP^{\bullet}$ is a finitely generated minimal graded projective $\Lambda$-resolution 
of $X$, then $F(\cP^{\ge c+1})$ is
a finitely generated minimal graded projective $\Lambda^*$-resolution of
$F(\Omega^{c+1}X)$. 
\end{enumerate}
\end{lemma}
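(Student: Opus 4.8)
The plan is to run both parts off a single engine: the exactness of $F$ (Proposition \ref{FGbasic}(1)) together with the computation of $F$ on graded projectives that belong to $f$. The decisive identity is $F(\Lambda f[g]) = f\Lambda\otimes_\Lambda \Lambda f[g]\cong f\Lambda f[g]=\Lambda^*[g]$, a graded projective $\Lambda^*$-module. First I would record that, since graded idempotents lift, I may refine $f$ into primitive orthogonal idempotents lying in $f\Lambda f$, writing $\Lambda f=\oplus_k \Lambda f_k$ with $f_k=f f_k f$; the indecomposable graded projectives belonging to $f$ are then exactly the shifts of the $\Lambda f_k$. Consequently any graded projective $P$ that belongs to $f$ is a direct sum of shifts of such summands, and $F(P)$ is the corresponding direct sum of shifts of $\Lambda^* f_k$, hence graded projective over $\Lambda^*$. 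Concretely, whenever $\epsilon=f\epsilon f$ one has $e\epsilon=ef\epsilon=0$, so $f\Lambda\epsilon=f\Lambda f\epsilon=\Lambda^*\epsilon$; this is the computation that makes $F$ send $f$-projectives to $\Lambda^*$-projectives.

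For part (1), the terms of $\cP^{>c}$ are $P^{c+1},P^{c+2},\dots$, all of index $>c$ and hence belonging to $f$. Applying the exact functor $F$ to the exact complex $\cP^{>c}$ yields an exact complex $F(\cP^{>c})$ whose terms $F(P^n)$, $n>c$, are graded projective $\Lambda^*$-modules by the previous paragraph, and whose augmentation is $F(\Omega^{c+1}X)$. That is precisely a graded projective $\Lambda^*$-resolution of $F(\Omega^{c+1}X)$, so (1) follows formally.

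For part (2), finite generation is immediate, since $F$ preserves finite direct sums and sends each $\Lambda\epsilon[g]$ to the finitely generated $\Lambda^*\epsilon[g]$. For minimality I would invoke the criterion of Lemma \ref{proj-cov}(5),(7) over $\Lambda^*$: a finitely generated graded projective resolution is minimal exactly when each differential is given by a matrix with entries in $\br^*$. Because $\cP^\bullet$ is minimal, each $\delta^n$ with $n\ge 1$ is given by a matrix with entries in $\br$; for $n\ge c+2$ both $P^n$ and $P^{n-1}$ belong to $f$, so I may take all relevant idempotents in $f\Lambda f$, and then each entry $\gamma$ satisfies $\gamma=f\gamma f\in f\br f=\br^*$. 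Under the identification $F(\Lambda\epsilon[g])\cong \Lambda^*\epsilon[g]$, the functor $F$ carries the map given by right multiplication by $\gamma$ to right multiplication by the same $\gamma$, so the differentials $F(\delta^n)$, $n\ge c+2$, are again given by matrices with entries in $\br^*$. By the criterion, $F(\cP^{\ge c+1})$ is minimal; in particular $\ker\bigl(F(P^{c+1})\to F(\Omega^{c+1}X)\bigr)=\operatorname{Im}F(\delta^{c+2})\subseteq \br^*F(P^{c+1})$, which via Lemma \ref{proj-cov}(4) shows the augmentation is a graded projective cover.

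The main obstacle is the minimality step, where two points need care. First, I must confirm that $\Lambda^*$ is itself a properly graded ring in which graded idempotents lift, with $\br^*$ as its graded Jacobson radical, so that Lemma \ref{proj-cov} is available over $\Lambda^*$; this was asserted just before Proposition \ref{FGbasic} and I would check it directly from $e,f\in\Lambda_{\mathfrak e}$. Second, I must verify that $F$ transports the matrix/radical description of differentials faithfully, namely that under $F(\Lambda\epsilon[g])\cong \Lambda^*\epsilon[g]$ the morphism $F(\delta^n)$ is still right multiplication by the same $\gamma\in\br^*$, so that applying $F$ creates no new generators outside $\br^*F(P^{n-1})$. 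Once these are in place, both parts follow formally from the exactness of $F$ and the structural lemmas.
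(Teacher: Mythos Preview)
Your proposal is correct and follows essentially the same route as the paper: part (1) is obtained from the exactness of $F$ together with the identity $F(\Lambda f[g])\cong \Lambda^*[g]$, and part (2) from the observation that under $F$ the matrix entries of the differentials, which lie in $\br$ by minimality, actually lie in $f\br f=\br^*$ once both source and target belong to $f$. Your extra care in refining $f$ into primitives and in flagging the need for Lemma \ref{proj-cov} over $\Lambda^*$ is more explicit than the paper's terse proof, but the underlying argument is the same.
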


\begin{proof}  The functor $F$ is exact.  We need to
show that if $P$ belongs to $f$, then $F(P)$ is a
projective $\Lambda^*$-module. Since $P$ belongs to $f$,
$P$ is a direct sum of indecomposable projective modules, each 
of which is a summand of $(\Lambda f)[g]$, for some $g\in G$.  Thus, it suffices to show that,
for $g\in G$,
$F((\Lambda f)[g])$ is a graded projective $\Lambda^*$-module.  But $F((\Lambda f)[g])=
(f\Lambda\otimes_{\Lambda}\Lambda f)[g]\cong (f\Lambda f)[g]=\Lambda^*[g]$
and part (1) follows.

By minimality and our assumptions, the maps $F(d^i)$, viewed
as matrices (as in Proposition \ref{proj-cov}), have entries in $f\br f$.  But
$f\br f=\br^*$, the graded Jacobson radical of $\Lambda^*$, and (2) follows.

\end{proof}

The following is an immediate consequence of the above Lemma.

\begin{cor}\label{grF-to-resol}
Assume that $\id_{\Lambda}((\Lambda/\br)e)=b<\infty$.
Suppose that $X$ is a graded $\Lambda$-module and let
\[
\cP^{\bullet}:\cdots{\to} P^2\stackrel{d^2}{\to}
P^1\stackrel{d^1}{\to} P^0\stackrel{d^0}{\to}X\to 0 \] be a
minimal graded projective $\Lambda$-resolution of $X$. Then,
for $n>b$, $P^n$ belongs to $f$ and
$F(\cP^{>b})$ is a minimal graded projective $\Lambda^*$-resolution of
$F(\Omega^{b+1}_{\Lambda}(X))$. 
\end{cor}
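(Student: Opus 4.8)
The plan is to get the Corollary out of Lemma~\ref{grFresol} with essentially no extra work, \emph{provided} its hypothesis is checked. Concretely, the only genuinely new statement to prove is that in a minimal graded projective resolution $\cP^{\bullet}$ of $X$ one has $P^n$ \emph{belonging to $f$} for every $n>b$. Granting this, I would apply Lemma~\ref{grFresol} with $c=b$: part~(1) yields at once that $F(\cP^{>b})$ is a projective $\Lambda^*$-resolution of $F(\Omega^{b+1}_{\Lambda}(X))$, and the minimality is the content of the matrix discussion in that lemma. So I will focus entirely on the belonging statement.

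First I would set up the standard dictionary between a minimal resolution and $\Ext$ into simples. Since $\cP^{\bullet}$ is minimal, Lemma~\ref{proj-cov}(5) gives $d^n(P^n)\subseteq \br P^{n-1}$ for all $n\ge 1$, so applying $\Hom_{\Gr(\Lambda)}(-,S)$ with $S$ a graded semisimple module annihilates every differential; hence
\[
\Ext^n_{\Gr(\Lambda)}(X,S)\cong \Hom_{\Gr(\Lambda)}(P^n,S).
\]
Because maps into a semisimple module factor through the top, $P^n$ has an indecomposable projective summand belonging to $e$ precisely when $\Hom_{\Gr(\Lambda)}(P^n,(\Lambda/\br)e[g])\neq 0$ for some $g\in G$. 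Thus to prove $P^n$ belongs to $f$ it suffices to show these graded $\Hom$ groups all vanish for $n>b$.

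Next I would bring in the injective dimension hypothesis. The shift functors are exact autoequivalences of $\Gr(\Lambda)$, so each $(\Lambda/\br)e[g]$ has the same injective dimension $b$ as $(\Lambda/\br)e$; therefore $\Ext^n_{\Gr(\Lambda)}(X,(\Lambda/\br)e[g])=0$ for every $n>b$ and every $g$. Combined with the displayed isomorphism this forces $\Hom_{\Gr(\Lambda)}(P^n,(\Lambda/\br)e[g])=0$ for $n>b$, i.e. no summand of $\Top(P^n)$ belongs to $e$. To upgrade ``not belonging to $e$'' to ``belonging to $f$'', I use that $1=e+f$ with orthogonal homogeneous idempotents gives $\Lambda/\br=(\Lambda/\br)e\oplus(\Lambda/\br)f$, so every graded simple is a summand of one of these, while the suitable-pair vanishing $\Hom_{\Lambda}((\Lambda/\br)e,(\Lambda/\br)f)=0=\Hom_{\Lambda}((\Lambda/\br)f,(\Lambda/\br)e)$ shows the two families of simples are disjoint. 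Hence every graded simple belongs to exactly one of $e,f$, and $P^n$ belongs to $f$ for $n>b$, verifying the hypothesis of Lemma~\ref{grFresol}.

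The hard part will be the bookkeeping around the graded shifts in the vanishing step: I must make the single injective-dimension bound on $(\Lambda/\br)e$ control all the shifts $(\Lambda/\br)e[g]$ simultaneously, which is exactly what detects summands of $\Top(P^n)$ in every internal degree without assuming $X$ (hence $P^n$) is finitely generated. A secondary point to check is that the minimality of $F(\cP^{>b})$ does not quietly require finite generation, since Lemma~\ref{grFresol}(2) is stated in the finitely generated case: using that $F$ is exact with $F(-)\cong f(-)$ and that $F(\br P)=\br^{*}F(P)$ when $P$ belongs to $f$, the image criterion $F(d^n)(F(P^n))=F(d^n(P^n))\subseteq F(\br P^{n-1})=\br^{*}F(P^{n-1})$ establishes minimality via Lemma~\ref{proj-cov}(5), so the Corollary holds as stated for arbitrary graded $X$.
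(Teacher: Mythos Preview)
Your argument is correct and follows the same route as the paper: show that for $n>b$ any indecomposable summand of $P^n$ belonging to $e$ would force $\Ext^n_{\Lambda}(X,(\Lambda/\br)e)\neq 0$, contradicting $\id_{\Lambda}((\Lambda/\br)e)=b$, and then invoke Lemma~\ref{grFresol}. You supply considerably more detail than the paper's three-line proof---in particular your treatment of the graded shifts and your observation that Lemma~\ref{grFresol}(2) is stated only for finitely generated resolutions (which you patch via the image criterion $F(d^n)(F(P^n))\subseteq \br^{*}F(P^{n-1})$) are points the paper glosses over.
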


\begin{proof} Let $n>b$ and consider $P^n$.  If there is an indecomposable
summand of $ P^n$ belonging to $e$,  then $\Ext^n_{\Lambda}(X,(\Lambda/\br)e)\ne 0$, contradicting   $\id_{\Lambda}((\Lambda/\br)e)=b$.
Hence, $P^n$ belongs to $f$ and the result follows.
\end{proof}

Using the above result we have one of the main results of this section.

\begin{thm}\label{grfirst-half} Let $G$ be a group
and $\Lambda=\oplus_{g\in G}\Lambda_g$ be
a properly $G$-graded ring in which graded idempotents lift.
Let $\br$ denote the graded Jacobson radical of $\Lambda$ and
 $(e,f)$ be a suitable idempotent pair.  Set $\Lambda^*$ be the ring $f\Lambda
f$ and $\br^*=f\br f$. Assume that $\pd_{\Lambda^*}f\Lambda e=c<\infty$, and
that $\id_{\Lambda}(\Lambda/\br e)=b<\infty$. Then, for a graded
$\Lambda$-module $X$ having finitely generated projective
resolutions and for $n>b+c+2$, the
functor $F=f\Lambda\otimes_{\Lambda}-\colon\Gr(\Lambda)\to
\Gr(\Lambda^*)$ induces isomorphisms
\[
\Ext^n_{\Lambda}(\Phi(X),(\Lambda/\br )f)\cong
\Ext^n_{\Lambda^*}(\Phi(F(X)),\Lambda^*/\br^*).
\]
Moreover,  assuming that every graded simple $\Lambda$-module belonging
to $f$ has a finitely generated minimal
graded projective resolution, then the induced isomorphism
\[
\bigoplus_{n>
b+c+2}\Ext^n_{\Lambda}((\Lambda/\br) f,(\Lambda/\br )f)\cong
\bigoplus_{n>
b+c+2}\Ext^n_{\Lambda^*}(\Lambda^*/\br^*,\Lambda^*/\br^*)\]
is an isomorphism of $\mathbb Z\times G$-graded rings without
identity.
Furthermore, identifying  
$\oplus_{n>
b+c+2}\Ext^n_{\Lambda}((\Lambda/\br) f,(\Lambda/\br )f)$ and
$\oplus_{n>
b+c+2}\Ext^n_{\Lambda^*}(\Lambda^*/\br^*,\Lambda^*/\br^*)$ and
denoting this ring by $\Delta$, $\oplus_{n>b+c+2}\Ext^n_{\Lambda}(\Phi(X),(\Lambda/\br )f)$ and
$\oplus_{n>b+c+2}\Ext^n_{\Lambda^*}(\Phi(F(X)),\Lambda^*/\br^*)$ are
isomorphic as graded $\Delta$-modules.
\end{thm}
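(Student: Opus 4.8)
The plan is to prove the first (degreewise) statement as a composite of four isomorphisms, and then to recognize the whole composite as the map induced by the exact functor $F$ on Yoneda $\Ext$, so that compatibility with the ring and module structures is inherited from functoriality. Fix a finitely generated minimal graded projective resolution $\cP^{\bullet}\colon\cdots\to P^1\to P^0\to X\to 0$ and set $Y=\Omega^{b+1}_{\Lambda}(X)$. By Corollary \ref{grF-to-resol}, $P^i$ belongs to $f$ for every $i>b$, so $\cP^{>b}$ is a minimal graded projective resolution of $Y$ all of whose terms belong to $f$, and the same Corollary tells us that $F(\cP^{>b})$ is a minimal graded projective $\Lambda^*$-resolution of $F(Y)=F(\Omega^{b+1}_{\Lambda}(X))$.

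For $n>b+c+2$ put $m=n-b-1$, so that $m>c+1\ge 1$; this bookkeeping is exactly what makes the steps below applicable. I would then assemble the isomorphism from four pieces: $(A)$ the dimension shift $\Ext^n_{\Lambda}(\Phi(X),(\Lambda/\br)f)\cong\Ext^m_{\Lambda}(\Phi(Y),(\Lambda/\br)f)$, valid since $n>b+1$; $(B)$ the natural identification induced by $F$, namely $\Hom_{\Lambda}(\Phi(P),(\Lambda/\br)f)\cong\Hom_{\Lambda^*}(\Phi(F(P)),\Lambda^*/\br^*)$ for projectives $P$ belonging to $f$ (which holds because $F(\Lambda f[g])\cong\Lambda^*[g]$ and $F((\Lambda/\br)f)\cong\Lambda^*/\br^*$), applied to the two minimal resolutions above to give an isomorphism of $\Hom$-complexes and hence $\Ext^m_{\Lambda}(\Phi(Y),(\Lambda/\br)f)\cong\Ext^m_{\Lambda^*}(\Phi(F(Y)),\Lambda^*/\br^*)$; $(C)$ Theorem \ref{grapplyF} with syzygy index $b+1$ and cohomological degree $m>c+1$, giving $\Ext^m_{\Lambda^*}(\Phi(F(Y)),\Lambda^*/\br^*)\cong\Ext^m_{\Lambda^*}(\Phi(\Omega^{b+1}_{\Lambda^*}F(X)),\Lambda^*/\br^*)$; and $(D)$ the dimension shift $\Ext^m_{\Lambda^*}(\Phi(\Omega^{b+1}_{\Lambda^*}F(X)),\Lambda^*/\br^*)\cong\Ext^n_{\Lambda^*}(\Phi(F(X)),\Lambda^*/\br^*)$, valid since $m\ge 1$. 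Composing $(A)$ through $(D)$ yields the first assertion.

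For the multiplicative statements I specialize to $X=(\Lambda/\br)f$, for which $F(X)\cong\Lambda^*/\br^*$. The standing hypothesis that every graded simple $\Lambda$-module belonging to $f$ has a finitely generated minimal graded projective resolution gives the same for $(\Lambda/\br)f$, which is a finite direct sum of such simples because $\Lambda_{\mathfrak e}/\br_{\mathfrak e}$ is semisimple Artinian; hence the first part applies and produces the degreewise isomorphism $\bigoplus_{n>b+c+2}\Ext^n_{\Lambda}((\Lambda/\br)f,(\Lambda/\br)f)\cong\bigoplus_{n>b+c+2}\Ext^n_{\Lambda^*}(\Lambda^*/\br^*,\Lambda^*/\br^*)$, and for general $X$ the module isomorphism of the final sentence. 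Since $F$ is exact it carries an $n$-fold extension to an $n$-fold extension and commutes with splicing, so it induces a homomorphism of $\mathbb Z\times G$-graded Yoneda $\Ext$-rings together with a compatible homomorphism of the associated $\Ext$-modules; every map in sight is a graded degree-$\mathfrak e$ map preserving cohomological degree, so the $\mathbb Z\times G$-grading is respected throughout. Restricting to degrees $n>b+c+2$ is harmless because $i,j>b+c+2$ forces $i+j>b+c+2$, so products and the action stay inside the truncated range, producing the asserted graded rings and modules without identity.

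The main obstacle is the identification of the composite $(A)$--$(D)$ with the functorially induced map, which is what makes it multiplicative. Steps $(A)$, $(B)$, $(D)$ are unproblematic: $(B)$ is literally $F$ applied to a genuine projective resolution, while the two dimension shifts are the standard connecting isomorphisms, compatible with Yoneda products. The delicate point is $(C)$: one must check that the syzygy-swap of Theorem \ref{grapplyF}, interchanging $F(\Omega^{b+1}_{\Lambda}X)$ with $\Omega^{b+1}_{\Lambda^*}F(X)$, is not merely natural in the second variable (which Theorem \ref{grapplyF} already provides) but also compatible with the composition pairing $\Ext^i(B,C)\otimes\Ext^j(A,B)\to\Ext^{i+j}(A,C)$ that mixes variables. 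The cleanest way I would discharge this is to pass to derived categories: exactness of $F$ gives $F\colon D(\Lambda)\to D(\Lambda^*)$ with $\Phi F\cong F\Phi$, under which all four steps become applications of $F$ together with canonical shift and truncation isomorphisms, so that preservation of the Yoneda product (composition in $D(\Lambda^*)$) and of the module action becomes automatic. I expect that spelling out this naturality and derived-category compatibility is where the real work lies, the remainder being the degree-bound bookkeeping recorded above.
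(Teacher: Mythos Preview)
Your proposal is correct and follows the paper's overall strategy: use Corollary~\ref{grF-to-resol} to see that beyond degree $b$ the minimal resolution belongs to $f$ and that $F$ of its tail is again minimal, and then combine dimension shifts with Theorem~\ref{grapplyF}. The one genuine difference is your step~(B). You observe directly that for a projective $P$ belonging to $f$ the map $\Hom_\Lambda(\Phi(P),(\Lambda/\br)f)\to\Hom_{\Lambda^*}(\Phi(F(P)),\Lambda^*/\br^*)$ is an isomorphism (since $F(\Lambda f[g])\cong\Lambda^*[g]$ and $f(\Lambda/\br)f\cong\Lambda^*/\br^*$), so applying $F$ to the tail resolution yields an isomorphism of Hom-complexes and hence of $\Ext$. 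The paper instead shows the induced map on $\Ext$ is injective by tracking a representative $\alpha\colon P^n\to S[g]$ through $F$ and using minimality of $F(\cP^{>b})$, and then deduces surjectivity by a length comparison over the commutative Artin ring $C$; your route bypasses that length argument entirely. Your treatment of the multiplicative statement is also more careful than the paper's, which simply asserts that exactness of $F$ makes the isomorphism a ring map; the concern you flag about compatibility of the syzygy-swap in step~(C) with Yoneda products is legitimate, and the derived-category reformulation you sketch is a clean way to discharge it.
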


\begin{proof} Let $X$ be a graded $\Lambda$-module and let \[\cP^{\bullet}:\cdots{\to} P^2\stackrel{\delta^2}{\to}
P^1\stackrel{\delta^1}{\to} P^0\stackrel{\delta^0}{\to}X\to 0
\] be a minimal graded projective $\Lambda$-resolution of the graded module
$X$. By our assumption that $id_{\Lambda}((\Lambda/\br)e)=b$, for
$n>b$, $P^n$ belongs to $f$. Hence,  
applying the functor $F$ and Proposition
\ref{grF-to-resol}, we see that
\[
F(\cP^{\ge n}):\cdots{\to} F(P^{n+2})\stackrel{F(\delta^{n+2})}{\to}
F(P^{n+1})\stackrel{F(\delta^{n+1})}{\to}
F(P^n)\stackrel{F(\delta^n)}{\to}F(\Omega^n(X))\to 0
\]
is a minimal graded projective $\Lambda^*$-resolution of
$F(\Omega^n(S))$. By Theorem \ref{grapplyF},
\[ (**)\quad\quad
\Ext_{\Lambda^*}^t(\Phi(F(\Omega^n_{\Lambda}(X))),-)\cong
\Ext_{\Lambda^*}^t(\Omega^n_{\Lambda^*}(\Phi(F(X))),-),
\] for $t>c$.
By (**) , dimension shift,  and since $\Phi$ commutes with $F$ and $\Omega$,
\[\Ext_{\Lambda^*}^t(\Phi(F(\Omega^n_{\Lambda}(X))),-)\cong
\Ext_{\Lambda^*}^{t+n}(\Phi(F(X)),-).\]
Let $S$ be a simple graded $\Lambda$-module belonging to $f$ and
let $S^*=F(S)$.  First we show that, using the above isomorphisms, if
$n>b+c+2$, then $F$ induces a monomorphism
\[ \Ext^n_{\Lambda}(\Phi(X),\Phi(S))\to
\Ext^n_{\Lambda^*}(\Phi(F(X)),\Phi(S^*)).
\]
We recall that $\Ext^n_{\Lambda}(\Phi(X),\Phi(S)) \cong \Ext^n_{\Gr(\Lambda)}(X,\oplus_{g\in G}S[g])$
and  that \linebreak
$\Ext^n_{\Lambda^*}(\Phi(F(X)),\Phi(S^*)\cong\Ext^n_{\Gr(\Lambda)}(F(X),\oplus_{g\in G}S^*[g])$.
Suppose $\alpha:P^n\to S[g]$ represents a nonzero element in
$\Ext^n_{\Gr(\Lambda)}(X,S[g])$.  It is now easy to see that
$F(\alpha)\colon F(P^n)\to S^*[g]$ is nonzero. By minimality of
$F(\cP^{\bullet})$ from $b+1$ on, $F(\alpha)$ represents a nonzero
element of $\Ext^n_{\Gr(\Lambda)}(F(X),S^*[g])$.

Having shown that if $n>b+c+2$, then $F$ induces an
monomorphism
\[ \Ext^n_{\Lambda}(X,S)\to
\Ext^n_{\Lambda^*}(F(X),S^*),
\]
we now show that $F$ induces an epimorphism. 
Since 
\[\Ext^n_{\Lambda}(\Phi(X),\Phi(S))\cong \Hom_{\Lambda}(\Phi(P^n),\Phi(S))\cong \Hom_{\Lambda_0}(\Phi(P^n/\br P^n),\Phi(S))\] and
\[\Ext^n_{\Lambda^*}(\Phi(F(X)),\Phi(S^*))\cong \Hom_{\Lambda^*}(\Phi(F(P^n)),\Phi(S^*))\]\[\cong \Hom_{\Lambda^*_0}(\Phi(F(P^n)/\br^* F( P^n)),\Phi(S^*)),\]
we conclude that the lengths of $\Ext^n_{\Lambda}(\Phi(X),\Phi(S))$ and 
$\Ext^n_{\Lambda^*}(\Phi(F(X),\Phi(S^*))$
are equal as modules over the commutative Artin ring $C$, over which both $\Lambda/\br$ and
$\Lambda^*/\br^*$ are both finite length modules.
Since $F$ induces a monomorphism, we conclude that $F$ is an isomorphism.

By taking direct sums over simple modules belonging to
$f$, the isomorphisms $\Ext^n_{\Lambda}(\Phi(X),\Phi(S))\to
\Ext^n_{\Lambda^*}(\Phi(F(X)),\Phi(S^*))$ induced by $F$ extends to an
isomorphism
\[
\Ext^n_{\Lambda}(X,\Lambda/\br f)\cong
\Ext^n_{\Lambda^*}(F(X),\Lambda^*/\br^*),
\]
Taking $X=(\Lambda/\br)f$ we obtain the isomorphism
\[
\Ext^n_{\Lambda}((\Lambda/\br)f,(\Lambda/\br) f)\cong
\Ext^n_{\Lambda^*}(\Lambda^*/\br^*,\Lambda^*/\br^*),
\]
 Since $F$ is an exact functor, the induced isomorphism
\[
\bigoplus_{n\ge b+c+2}\Ext^n_{\Lambda}(\Lambda/\br f,\Lambda/\br f)\cong
\bigoplus_{n\ge b+c+2}\Ext^n_{\Lambda^*}(\Lambda^*/\br^*,\Lambda^*/\br^*),
\] is an isomorphism of $\mathbb Z\times G$-graded rings (without identity) the
assertion about $\oplus_{n>b+c+2}\Ext^n_{\Lambda}((X,(\Lambda/\br) f)\cong
\oplus_{n>b+c+2}\Ext^n_{\Lambda^*}(F(X),\Lambda^*/\br^*)$ being a graded module isomorphism
follows.\end{proof}

We have the following consequence of the above proof.

\begin{prop}\label{pd-simple} 
Keeping the notation and hypothesis of Theorem \ref{grfirst-half},
let $X$ be a graded $\Lambda$-module.  Then $\pd_{\Lambda}(\Phi(X)) <\infty$ if and only
if $\pd_{\Lambda^*}(\Phi(F(X)) <\infty$.
\end{prop}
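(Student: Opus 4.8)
The plan is to prove the two implications separately and to reduce everything to one numerical observation: the functor $F$ carries every graded projective $\Lambda$-module to a graded $\Lambda^*$-module of projective dimension at most $c$. Indeed, $F(\Lambda[g])=f\Lambda[g]$, and by Proposition \ref{FGbasic}(3) we have $f\Lambda\cong\Lambda^*\oplus f\Lambda e$ as left $\Lambda^*$-modules, so $\pd_{\Lambda^*}(\Phi(F(\Lambda[g])))=\max\{0,c\}=c$. Since $F$ preserves direct sums and every graded projective $P$ is a summand of a direct sum of shifts $\Lambda[g]$, it follows that $\pd_{\Lambda^*}(\Phi(F(P)))\le c$ for every graded projective $\Lambda$-module $P$. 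Throughout I will freely use Lemma \ref{proj-cov}(8), which lets me pass between graded and ungraded projectivity and projective dimension.

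For the forward implication, suppose $\pd_{\Lambda}(\Phi(X))=d<\infty$. The $d$-th syzygy in a minimal graded projective $\Lambda$-resolution of $X$ becomes, under $\Phi$, the $d$-th syzygy of $\Phi(X)$, which is projective; hence by Lemma \ref{proj-cov}(8) that syzygy is graded projective, and $X$ admits a finite graded projective resolution $0\to P^d\to\cdots\to P^0\to X\to 0$. Applying the exact functor $F$ gives a finite exact sequence $0\to F(P^d)\to\cdots\to F(P^0)\to F(X)\to 0$ in which every $F(P^i)$ has projective dimension at most $c$. Splicing this into short exact sequences and applying dimension shifting yields $\pd_{\Lambda^*}(\Phi(F(X)))\le c+d<\infty$.

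For the converse, suppose $\pd_{\Lambda^*}(\Phi(F(X)))<\infty$ and let $\cP^{\bullet}$ be a minimal graded projective $\Lambda$-resolution of $X$. By Corollary \ref{grF-to-resol}, for $n>b$ the term $P^n$ belongs to $f$, and $F(\cP^{>b})$ is a \emph{minimal} graded projective $\Lambda^*$-resolution of $F(\Omega^{b+1}(X))$. Applying $F$ to $0\to\Omega^{b+1}(X)\to P^b\to\cdots\to P^0\to X\to 0$ gives a finite exact sequence whose middle terms again have projective dimension at most $c$; splicing it together with the assumption $\pd_{\Lambda^*}(\Phi(F(X)))<\infty$ shows $\pd_{\Lambda^*}(\Phi(F(\Omega^{b+1}(X))))<\infty$. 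Because $F(\cP^{>b})$ is minimal, a module of finite projective dimension can only have a resolution of finite length, so $F(P^n)=0$ for $n\gg 0$. As each $P^n$ with $n>b$ belongs to $f$, vanishing of $F(P^n)$ forces $P^n=0$, so $\cP^{\bullet}$ terminates and $\pd_{\Lambda}(\Phi(X))<\infty$.

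The main obstacle is the converse, and within it the step that vanishing of $F(P^n)$ detects vanishing of $P^n$: this is exactly where I must use that $P^n$ belongs to $f$. Each nonzero indecomposable summand of such a $P^n$ has the form $\Lambda\epsilon[g]$ with simple top belonging to $f$, so $f(\Lambda/\br)\epsilon\ne 0$ and hence $f\Lambda\epsilon\not\subseteq\br$; in particular $F(\Lambda\epsilon[g])=f\Lambda\epsilon[g]\ne 0$. Thus $F$ never kills a nonzero graded projective belonging to $f$, which is what turns the finiteness of the minimal resolution $F(\cP^{>b})$ into finiteness of $\cP^{\bullet}$. The two supporting facts — that $F(\cP^{>b})$ is genuinely minimal (Corollary \ref{grF-to-resol}) and that minimality converts finite projective dimension into finite length — are the crux of the argument.
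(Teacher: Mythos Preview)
Your argument is correct, but it proceeds along a different line from the paper's. The paper's proof is a one-line application of the $\Ext$ isomorphism already established in Theorem~\ref{grfirst-half}: for $n>b+c+2$ and every graded simple $S$ belonging to $f$ one has $\Ext^n_{\Lambda}(\Phi(X),\Phi(S))\cong\Ext^n_{\Lambda^*}(\Phi(F(X)),\Phi(F(S)))$, and since $\id_{\Lambda}((\Lambda/\br)e)=b$, vanishing of $\Ext^n_{\Lambda}(\Phi(X),-)$ for large $n$ is detected entirely by the simples belonging to $f$. Finite projective dimension on either side is then read off from these $\Ext$ groups via the finitely generated minimal resolution of $X$.

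Your route avoids invoking the full $\Ext$ isomorphism. The forward direction uses only $\pd_{\Lambda^*}(f\Lambda e)=c<\infty$ together with exactness of $F$; in particular it does not need the injective-dimension hypothesis at all, which is a small gain. For the converse you appeal only to Corollary~\ref{grF-to-resol}, namely that $F(\cP^{>b})$ is a \emph{minimal} resolution of $F(\Omega^{b+1}(X))$, and then argue directly that a finitely generated minimal resolution of a module of finite projective dimension must terminate. The last step relies on the standing hypothesis that $X$ has a finitely generated resolution, so that each $F(P^n)$ is finitely generated and Nakayama applies; you use this implicitly and it would be worth saying so. Your observation that $F$ does not annihilate a nonzero graded projective belonging to $f$ is exactly what the paper's approach encodes via $\Hom_{\Lambda}(\Phi(P^n),\Phi(S))\cong\Hom_{\Lambda^*}(\Phi(F(P^n)),\Phi(S^*))$. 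In short: the paper leverages the heavy theorem it has just proved, while you give a more self-contained homological argument that isolates precisely which ingredients (exactness of $F$, the bound $\pd_{\Lambda^*}F(P)\le c$, and minimality of $F(\cP^{>b})$) are really needed.
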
 

\begin{proof}  From the proof of Theorem \ref{grfirst-half}, we see that
for every graded simple $\Lambda$-module $S$ belonging to $f$,
\[
\Ext_{\Lambda}^n(\Phi(X),\Phi(S))\cong \Ext_{\Lambda^*}^n(\Phi(F(X)),\Phi(F(S))),\]
for $n>b +c+2$.   But by our assumption on finitely generated resolutions,
and that $\id_{\Lambda}(\Lambda e)=b$,  we see that $\Ext_{\Lambda}^n(\Phi(X),-)=0$
if and only if  $\Ext_{\Lambda}^n(\Phi(X),\Phi(S)) =0$ for all graded simple modules  $S$ belonging to $f$.
Finally, $\Ext_{\Lambda}^n(\Phi(X),-)=0$ if and only if $\pd_{\Lambda}(X)\le n-1$.
\end{proof}

By combining Proposition \ref{grExt-G-S} and Theorem \ref{grfirst-half},
we obtain the desired result.

\begin{thm}\label{grG-to-ss-thm}
 Let $G$ be a group with identity element $\mathfrak e$
and $\Lambda=\oplus_{g\in G}\Lambda_g$ be
a properly $G$-graded ring in which graded idempotents lift.
Assume that every graded simple $\Lambda$-module has a finitely generated minimal
graded projective
$\Lambda$-resolution.  Let $\br$ denote the graded Jacobson radical of $\Lambda$.
Suppose that  $(e,f)$ is a suitable idempotent pair in
$\Lambda_{\mathfrak e}$ and set $\Lambda^*$ be the ring $f\Lambda
f$ and $\br^*=f\br f$. Assume that  $\pd_{\Lambda^*}(f\Lambda e)=c<\infty$, 
$\pd_{\Lambda}((\Lambda/\br)  e)=a<\infty$, and
that $\id_{\Lambda}((\Lambda/\br) e)=b<\infty$. Then, for $n>\max\{a,b+c+2\}$,
there are isomorphisms
$\Ext^n_{\Lambda}(\Lambda/\br,\Lambda/\br)\cong
\Ext^n_{\Lambda^*}(\Lambda^*/\br^*,\Lambda^*/\br^*)$ such that the induced
isomorphism
\[\bigoplus_{n>\max\{a,b+c+2\}}\Ext^n_{\Lambda}(\Lambda/\br,\Lambda/\br)\cong
\bigoplus_{n>\max\{a,b+c+2\}}\Ext^n_{\Lambda^*}(\Lambda^*/\br^*,\Lambda^*/\br^*)\]

Letting $\Delta = \oplus_{n>\max\{a,b+c+2\}}\Ext^n_{\Lambda}(\Lambda/\br,\Lambda/\br)$, if  $X$ is a graded $\Lambda$-module having a finitely 
generated projective resolution, then
\[
\bigoplus_{n>\max\{a,b+c+2\}}\Ext^n_{\Lambda}(\Phi(X),\Lambda/\br)\text{ and }
\bigoplus_{n>\max\{a,b+c+2\}}\Ext^n_{\Lambda^*}(\Phi(F(X)),\Lambda^*/\br^*)\]
are isomorphic as graded $\Delta$-modules.
\qed \end{thm}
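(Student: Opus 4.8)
The plan is to obtain the theorem simply by composing two isomorphisms that are already in hand: the intrinsic comparison inside $\Lambda$ supplied by Proposition \ref{grExt-G-S}, and the transfer from $\Lambda$ to $\Lambda^*$ supplied by Theorem \ref{grfirst-half}. The first step is to fix the uniform cutoff $N=\max\{a,b+c+2\}$ and to check that it dominates every bound appearing in the two cited results. Since $b+c+2\ge b$ we have $N\ge\max\{a,b\}$, so for $n>N$ both hypotheses $\pd_{\Lambda}((\Lambda/\br)e)=a$ and $\id_{\Lambda}((\Lambda/\br)e)=b$ are in force; and simultaneously $N\ge b+c+2$, which is the range in which Theorem \ref{grfirst-half} applies. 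Thus a single truncation $n>N$ makes both inputs available at once.

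Next I would invoke Proposition \ref{grExt-G-S}. Because $n>N\ge\max\{a,b\}$, its ring version gives a $\mathbb Z\times G$-graded (non-unital) ring isomorphism
\[
\bigoplus_{n>N}\Ext^n_{\Lambda}(\Lambda/\br,\Lambda/\br)\cong
\bigoplus_{n>N}\Ext^n_{\Lambda}((\Lambda/\br)f,(\Lambda/\br)f),
\]
and its module version gives, for any graded $X$, a graded isomorphism
\[
\bigoplus_{n>N}\Ext^n_{\Lambda}(\Phi(X),\Lambda/\br)\cong
\bigoplus_{n>N}\Ext^n_{\Lambda}(\Phi(X),(\Lambda/\br)f)
\]
over the cohomology ring. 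I would then apply Theorem \ref{grfirst-half} to rewrite the $f$-components in terms of $\Lambda^*$, obtaining the ring isomorphism
\[
\bigoplus_{n>N}\Ext^n_{\Lambda}((\Lambda/\br)f,(\Lambda/\br)f)\cong
\bigoplus_{n>N}\Ext^n_{\Lambda^*}(\Lambda^*/\br^*,\Lambda^*/\br^*)=\Delta
\]
and, when $X$ has a finitely generated projective resolution, the $\Delta$-module isomorphism
\[
\bigoplus_{n>N}\Ext^n_{\Lambda}(\Phi(X),(\Lambda/\br)f)\cong
\bigoplus_{n>N}\Ext^n_{\Lambda^*}(\Phi(F(X)),\Lambda^*/\br^*).
\]
Composing the two ring isomorphisms yields the first assertion of the theorem, and composing the two module isomorphisms yields the final assertion.

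The one point needing genuine care, rather than mere bookkeeping, is the compatibility of the module structures along this chain. Proposition \ref{grExt-G-S} presents its modules over the full cohomology ring $\oplus\Ext^n_{\Lambda}(\Lambda/\br,\Lambda/\br)$ by restriction to the $f$-graded summand, whereas Theorem \ref{grfirst-half} transports everything along the exact functor $F$ and records its isomorphisms over $\Delta$. I would therefore verify that the three truncated rings — the cohomology of $\Lambda$, its $(f,f)$-summand, and the cohomology of $\Lambda^*$ — are identified by a single compatible system, so that the composite module isomorphism is in fact $\Delta$-linear. This compatibility is already built into the statements being used: Proposition \ref{grExt-G-S} asserts its isomorphism as one of modules over the cohomology ring, and Theorem \ref{grfirst-half} asserts $\Delta$-linearity with $\Delta$ identified as both cohomology rings. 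What remains is to observe that the Yoneda product defining the action is preserved at each step, which holds because $F$ is exact and hence respects composition of extensions. Granting this, the composite maps are a ring isomorphism and a $\Delta$-module isomorphism, respectively, and the theorem follows.
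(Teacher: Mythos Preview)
Your proposal is correct and follows exactly the approach the paper takes: the theorem is stated with a \qed and the preceding sentence says only that it is obtained ``by combining Proposition \ref{grExt-G-S} and Theorem \ref{grfirst-half}.'' You have simply written out the bookkeeping (checking that $N=\max\{a,b+c+2\}$ dominates the bounds in both cited results, and that the ring and module structures are compatible under composition) that the paper leaves implicit.
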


\section{Applications}\label{applications}
We begin this section with a well-known result whose proof we
include for completeness.

\begin{prop}\label{large-gen}Let $R=R_0\oplus R_1\oplus
R_2\oplus\cdots$ be a finitely generated positively $\mathbb Z$-graded $C$-algebra where
$C$ is a commutative ring.  Let $N$ be a fixed positive
integer.  Then there
is a positive integer $D$ with $N< D$ such that the following holds.
\begin{enumerate}
\item[] If $j> D$ and $r\in R_j$, then
$r=\sum_ic_iu_{i,1}u_{i,2}\cdots u_{i,m_i}$, where $c_i\in C$ and each $u_{i,k}\in
R_{\ell}$, with $N\le \ell< D$.
\end{enumerate}
\end{prop}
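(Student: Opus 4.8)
The plan is to prove this by a greedy chunking argument applied to monomials in a fixed finite set of homogeneous algebra generators. First I would fix homogeneous $C$-algebra generators $x_1,\dots,x_s$ of $R$ with $\deg x_i = d_i$. If every $d_i=0$ then $R=R_0$ and $R_j=0$ for $j>0$, so the statement is vacuous; hence I may assume $d:=\max_i d_i\ge 1$. I then set $D=2N+d$, which satisfies $N<D$. Because $R$ is generated as a $C$-algebra by the $x_i$ and the grading is positive, every $r\in R_j$ is a $C$-linear combination $r=\sum_i c_i w_i$ of monomials $w_i=x_{i_1}x_{i_2}\cdots x_{i_{k_i}}$, each of total degree exactly $j$. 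It therefore suffices to rewrite a single such monomial $w=y_1\cdots y_k$, with each $y_t$ a generator, as a product of homogeneous factors whose degrees all lie in $[N,D)$; reinstating the coefficients $c_i\in C$ then yields the claimed expression.

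For a fixed monomial $w=y_1\cdots y_k$ of degree $j>D$, I would process the factors from left to right, accumulating a running product and declaring it a completed \emph{block} the moment its degree first reaches $\ge N$, after which I open a new block. Since a block has degree $\le N-1$ just before its final generator is absorbed, and that generator has degree $\le d$, every completed block is homogeneous of degree in $[N,\,N+d-1]\subseteq[N,D)$. Generators of degree $0$ simply attach to whichever block is currently open without affecting these bounds. As $j>D\ge N$, the accumulated degree certainly reaches $N$, so at least one block is completed.

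The delicate point is the leftover tail: after the last completed block there may remain a product of generators of total degree $<N$ (possibly consisting only of degree-$0$ generators), which is not on its own an admissible factor. I would handle this by merging the leftover into the last completed block, treating their product as a single homogeneous factor $u$. This $u$ contains a full block, so $\deg u\ge N$, and $\deg u\le (N+d-1)+(N-1)<2N+d=D$, so $u$ again lies in $[N,D)$. Thus $w$ is expressed as a product of homogeneous factors of degree in $[N,D)$, and summing over the monomials of $r$ completes the proof.

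I expect the main obstacle to be exactly this end-of-monomial bookkeeping: guaranteeing that at least one block closes and that absorbing a short tail of degree up to $N-1$ on top of a block of degree up to $N+d-1$ does not push the final factor's degree to $D$ or beyond. The choice $D=2N+d$ is dictated precisely by this worst case, and the positivity of the grading together with the finite generating set is what makes the greedy procedure terminate with every factor landing in the prescribed window.
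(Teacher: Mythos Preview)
Your proof is correct and follows essentially the same chunking strategy as the paper: reduce to a single monomial in a fixed finite set of homogeneous generators, break it into consecutive blocks of controlled degree, and absorb any short tail into the final block. The only difference is cosmetic --- the paper groups by \emph{generator count} (blocks of exactly $N$ generators, taking $D=2NL$), whereas you group by \emph{running degree} (close a block once its degree reaches $N$, taking $D=2N+d$); your variant handles degree-$0$ generators cleanly and gives a smaller $D$, but the underlying idea is the same.
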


\begin{proof}
Assume that $R$ can be generated over $C$ by homogeneous elements $x_1,\dots,x_n$ with each $x_i$
having degree at least $0$ and $L=\max\{\deg x_i\mid 1\le i\le n\}$.
Set $D=2LN$ and suppose $r\in R_j$ with $j>D$.  Then, by finite
generation, $r=\sum_ic_iy_{i,1}\cdots y_{i,t_i}$ where, for all $i,k$, $c_i\in C$,
$y_{i,k}\in\{x_1,\dots,x_n\}$ and $\sum_{k=1}^{t_i}\deg(y_{i,k})=j$,
for each $i$. Fix $i$ and write $y_{j}$ instead of $y_{i,j}$ and set
$t=t_i$. We see that \[D=2NL<j=\sum_{k=1}^{t}\deg(y_{k})\le Lt.\]
Hence $2N<t$. Write $t=AN+S$, where $0\le S<N$. For $i=1,\dots,A-1$,
set $u_i=y_{(i-1)N+1}y_{(i-1)N+2}\cdots y_{iN}$ and
$u_A=y_{(A-1)N+1} \cdots y_{AN}\cdot y_{AN+1}\cdots y_{t}$. It is
immediate that for $1\le i\le A$, $N\le \deg(u_i)< 2NL=D$.  This
completes the proof.

\end{proof}

We have some immediate consequences.

\begin{cor}\label{eventual}Let $R=R_0\oplus R_1\oplus
R_2\oplus\cdots$ be a positively $\mathbb Z$-graded
 ring such
that, $R_0$ is an Artin algebra over a commutative Artin ring $C$,
and, for $i\ge 0$, $R_i$ has finite length over $R_0$.
 Let $N$ be a fixed positive
integer.  Then $R$ is finitely generated as ring over $C$ if and only
if $T=\oplus_{i\ge N}R_i$ is finitely generated as a ring (without identity) over $C$.
\end{cor}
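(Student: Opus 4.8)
The plan is to establish both implications from the decomposition $R=R_0\oplus R_1\oplus\cdots$ together with the single nontrivial input, Proposition \ref{large-gen}. The basic observation I would record first is that each homogeneous piece $R_i$ is a finitely generated $C$-module: since $R_0$ is an Artin algebra over the commutative Artin ring $C$, $R_0$ has finite length over $C$, and since each $R_i$ has finite length over $R_0$, it has finite length, hence is finitely generated, as a $C$-module. Consequently any finite sum $\bigoplus_{i\in F}R_i$ is a finitely generated $C$-module; I will use this for the low-degree and middle-degree blocks in each direction.

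For the forward implication, assume $R$ is a finitely generated $C$-algebra. Then Proposition \ref{large-gen} applies and supplies an integer $D>N$ such that every $r\in R_j$ with $j>D$ can be written as $\sum_i c_i u_{i,1}\cdots u_{i,m_i}$ with $c_i\in C$ and each $u_{i,k}\in R_\ell$ for some $N\le\ell<D$. I would then choose homogeneous $C$-module generators $z_1,\dots,z_s$ of the finitely generated $C$-module $\bigoplus_{N\le\ell\le D}R_\ell$ and claim these generate $T$ as a (nonunital) $C$-algebra. Indeed, for $N\le j\le D$ every element of $R_j$ is a $C$-linear combination of the $z_k$ by construction, while for $j>D$ the displayed expression from Proposition \ref{large-gen} writes $r$ as a $C$-linear combination of products of elements of degree $\ell$ with $N\le\ell<D$, each of which is in turn a $C$-combination of the $z_k$; expanding, $r$ lies in the nonunital $C$-subalgebra generated by $z_1,\dots,z_s$. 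Hence $T$ is finitely generated.

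For the reverse implication, assume $T=\bigoplus_{i\ge N}R_i$ is finitely generated as a ring without identity over $C$, say by $w_1,\dots,w_t$. The complementary block $R_0\oplus R_1\oplus\cdots\oplus R_{N-1}$ is a finitely generated $C$-module by the opening observation; pick $C$-module generators $v_1,\dots,v_p$. I claim $v_1,\dots,v_p,w_1,\dots,w_t$ generate $R$ as a $C$-algebra: the $C$-span of the $v_i$ recovers all of $\bigoplus_{0\le i<N}R_i$ (including the identity, which lies in $R_0$), and the nonunital $C$-subalgebra generated by the $w_j$ recovers all of $T$; together these exhaust $R=\bigl(\bigoplus_{0\le i<N}R_i\bigr)\oplus T$. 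Thus $R$ is a finitely generated $C$-algebra.

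The content of the argument is entirely concentrated in Proposition \ref{large-gen}, which is what upgrades ``finite generation of $R$'' to the uniform bounded-degree factorization needed in the forward direction; the rest is bookkeeping. The only point demanding a little care is the boundary degree $j=D$, which is not covered by the range $N\le\ell<D$ appearing in Proposition \ref{large-gen}; I avoid this gap by including degree $D$ itself among the $C$-module generators $z_1,\dots,z_s$, so that all degrees in the closed range $[N,D]$ are handled directly and only strictly larger degrees are routed through the factorization.
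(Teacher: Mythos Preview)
Your proof is correct and follows essentially the same approach as the paper: both directions hinge on the finite $C$-length of the low-degree block $\bigoplus_{0\le i<N}R_i$, and the forward implication is handled via Proposition~\ref{large-gen}. The only packaging difference is that the paper takes as generators of $T$ all products $y_1\cdots y_t$ with $N\le t<2NL$ in a fixed generating set of $R$, whereas you instead take $C$-module generators of the block $\bigoplus_{N\le\ell\le D}R_\ell$; your version is arguably cleaner and your explicit handling of the boundary degree $j=D$ is a nice touch.
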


\begin{proof} Note that $R_0\oplus R_1\oplus \cdots\oplus R_{N-1}$ is of finite length over
$C$.  If $T$ is finitely generated over $C$, adding a finite numbers  generators of  
$R_0\oplus R_1\oplus \cdots\oplus R_{N-1}$ over $C$ to a 
set of  generators $T$ yields a finite generating set for $R$.

If $R$ is finitely generated as a ring over $C$, the proof of
Proposition \ref{large-gen} implies that $T$ is finitely generated,
by taking as a generating set, all products of the form
\[y_1y_2\cdots y_t,\]
where the $y_i$'s are elements of a finite generating set of $R$ and
$N\le t <2NL$, with $L$ being the maximum degree of the generators
of $R$.
\end{proof}

Before stating the main theorem of the section, we consider low terms
in resolutions of simple $\Lambda$- and $\Lambda^*$-modules.  More
precisely, suppose that $G$ is a group and that $\Lambda$ is a properly
$G$-graded ring in which graded idempotents lift.  Let $(e, f)$
be a suitable idempotent pair in $\Lambda$
and let $\Lambda^*=f\Lambda f$, $\br$ and 
$\br^*$ the graded Jacobson radicals of $\Lambda$ and $\Lambda^*$ respectively.
Assume all the conditions of Theorem \ref{grG-to-ss-thm}.
Let $S$ be a graded simple $\Lambda$-module and
$S^*=f\Lambda\otimes_{\Lambda}S$, viewed as a graded $\Lambda^*$-module.
Example \ref{ex41} shows that even if $S$ has a finitely generated graded
projective $\Lambda$-resolution, $S^*$ need not have a finitely generated 
graded projective $\Lambda^*$-resolution.
To remedy this situation,  we have the following result and its collorary.

\begin{prop}\label{gen-fp} Let $G$ be a group and $R=\oplus_{g\in G}R_g$ be a $G$-graded
ring.  Let $\cdots \to X^2\extto{d^2}X^1\extto{d^1}X^0\extto{d^0}M\to 0$ 
be an exact sequence of graded $R$-modules.  If, for all $n\ge 0$, $X^n$ has
a finitely generated graded projective $R$-resolution, then $M$ has a finitely
generated graded projective  $R$-resolution.
\end{prop}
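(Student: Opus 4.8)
The plan is to splice the given resolutions of the $X^n$ together into a total complex. A naive induction fails: from the short exact sequence $0\to \Ker d^0\to X^0\xrightarrow{d^0} M\to 0$ one can build a resolution of $M$ out of resolutions of $X^0$ and of $\Ker d^0=\Im d^1$ (via a mapping cone), but resolving $\Ker d^0$ needs $X^1$ together with $\Ker d^1$, and so on, so a single induction never terminates. Instead I would use every row at once. By hypothesis, for each $n\ge 0$ fix a finitely generated graded projective $R$-resolution
\[P^{\bullet}_n\colon\quad \cdots\to P^2_n\to P^1_n\to P^0_n\xrightarrow{\pi_n} X^n\to 0\]
with graded differentials. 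As each $d^n\colon X^n\to X^{n-1}$ ($n\ge 1$) has degree $\mathfrak e$, the comparison theorem, which holds in the abelian category $\Gr(R)$, lifts it to a graded chain map $P^{\bullet}_n\to P^{\bullet}_{n-1}$.

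Arranging the $P^p_q$ into a first-quadrant double complex with the resolution maps as vertical differentials and the chosen lifts as horizontal differentials, I would form the total complex $T$ with $T_k=\bigoplus_{p+q=k}P^p_q$ and total differential assembled from the vertical and horizontal components (with the usual signs). Since $p,q\ge 0$, each $T_k$ is a \emph{finite} direct sum of finitely generated graded projective modules, hence is itself finitely generated graded projective, and every differential is graded of degree $\mathfrak e$. To compute the homology of $T$ I would run the spectral sequence of the double complex, taking vertical homology first: each column $P^{\bullet}_q$ resolves $X^q$, so the $E_1$ page is concentrated in the row $p=0$, where it equals $X^q$ with horizontal differential $d^q$; hence the $E_2$ page is the homology of $\cdots\to X^1\to X^0\to 0$, which by exactness of $\cdots\to X^1\to X^0\xrightarrow{d^0} M\to 0$ is $M$ at the corner $(0,0)$ and $0$ elsewhere. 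As the double complex lies in the first quadrant the spectral sequence converges and degenerates, giving $H_0(T)\cong M$ and $H_k(T)=0$ for $k\ge 1$. Augmenting by the composite $T_0=P^0_0\xrightarrow{\pi_0}X^0\xrightarrow{d^0}M$ then exhibits $T$ as the desired finitely generated graded projective resolution of $M$.

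The step needing the most care is turning the horizontal lifts into an honest double complex: a priori the comparison theorem only yields that the horizontal composite $P^p_q\to P^p_{q-2}$, which lifts $d^{q-1}d^q=0$, is null-homotopic rather than zero. I would remedy this as in the Cartan--Eilenberg construction, modifying the total differential by higher ``correction'' maps $P^p_q\to P^{p+i}_{q-i-1}$ ($i\ge 1$), built inductively using projectivity of the $P^p_q$ and acyclicity of the columns so that $D^2=0$. Crucially, these corrections do not alter the modules $T_k$, so finite generation and the grading are preserved; and since they lower the column index by at least two, they affect only the pages $E_r$ with $r\ge 3$, leaving the homology computation above unchanged. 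Because the horseshoe lemma, the comparison theorem, and projectivity all hold in $\Gr(R)$, the entire construction stays within finitely generated graded projective modules, which completes the proof.
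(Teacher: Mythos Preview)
Your argument is correct and yields exactly the same resolution the paper builds, with $T_k=\bigoplus_{p+q=k}P^p_q$, but you reach it by a genuinely different route. The paper never forms a double complex or invokes a spectral sequence: it breaks the long exact sequence into short ones via $Y^{0,j}=\Im(d^j)$, takes the chosen resolutions $P^{\bullet,j}\to X^j$, and then runs an iterated Horseshoe construction, producing modules $Y^{i,j}$ and projectives $Q^{i,j}=Q^{i-1,j+1}\oplus P^{i,j}$ together with the short exact sequences $0\to Y^{i+1,0}\to Q^{i,0}\to Y^{i,0}\to 0$ that splice directly into a resolution of $M=Y^{0,0}$. This is entirely elementary---no spectral sequences, no homotopies, and in particular no need to confront the failure of the lifted horizontal maps to square to zero, because the Horseshoe step manufactures honest short exact sequences at every stage.

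Your approach trades that bookkeeping for more machinery: you accept that the lifts give only a twisted complex, add correction terms so that $D^2=0$, and then read off the homology from the column-filtration spectral sequence. One small imprecision: the first correction $P^p_q\to P^{p+1}_{q-2}$ shifts the column index by~$2$, so it already contributes to the $d_2$-differential, not only to pages $E_r$ with $r\ge 3$. This does not damage your argument, since your $E_2$ is concentrated at the single spot $(0,0)$ and therefore all differentials from $E_2$ onward vanish regardless. The payoff of your route is conceptual clarity and reusability (it is the Cartan--Eilenberg/totalization picture); the payoff of the paper's route is that it stays within the Horseshoe lemma and short exact sequences, which keeps the proof self-contained and makes the graded, degree-$\mathfrak e$ nature of every map transparent without appeal to further homological machinery.
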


\begin{proof}
For $j\ge 0$, let $X^{0,j}= X^j$ and $Y^{0,j}= \Im(d^j)$.  Note that
$Y^{0,0}=M$.  For each $j\ge 0$,  let 
\[\cdots \to P^{2,j}\extto{\delta^{2,j}}P^{1,j}\extto{\delta^{1,j}}P^{0,j}\extto{\delta^{0,j}
}X^{0,j}\to 0\]
be a finitely generated graded projective $R$-resolution of $X^{0,j}$.
For $i\ge 0$, define $X^{i,j}= \Im(\delta^{i,j})$.
Thus, for each $i\ge 0$, we have short exact
sequences
\[ 0\to X^{i+1,j}\to P^{i,j}\to X^{i,j}\to 0.\]

We inductively construct 
graded $R$-modules  $Y^{i,j}$ and finitely generated graded projective 
$R$-modules $Q^{i,j}$ such that
\begin{enumerate}
\item for each $i,j\ge 0$, there is a short exact sequence
$0\to Y^{i+1,j}\to Q^{i,j}\to Y^{i,j}\to 0$,
\item for $i\ge 0$ and $j\ge 1$, there is a short exact sequence
$0\to X^{i+1,j-1}\to Y^{i+1,j-1}\to Y^{i,j}\to 0$     and,
\item for $i\ge 1$ and $j\ge 0$, $Q^{i,j}= Q^{i-1,j+1}\oplus P^{i,j}$. 
\end{enumerate}

Once this is accomplished, splicing together the short exact sequences
$0\to Y^{i+1,0}\to Q^{i,0}\to Y^{i,0}\to 0$ we obtain a long exact sequence
\[\cdots\to Q^{2,0}\to Q^{1,0}\to Q^{0,0}\to  Y^{0,0}\to 0.\]  But $Y^{0,0}=M$ and
the result follows.

We have defined $Y^{0,s}$ and $P^{0,s}$ for all $s\ge 0$.   Set $Q^{0,i}=P^{0,i}$, for
all $i\ge 0$. 
We have exact sequences
\[ 0\to  Y^{0,s+1}\to X^{0,s}\to Y^{0,s}\to 0,\]
for all $s\ge $.  We also have exact sequences
$0\to X^{1,s}\to P^{0,s}\to X^{0,s}\to 0$ for all $s\ge 0$.  From these exact
sequences we obtain the following commutative diagram.

\[
\xymatrix{
&&0&0\\
0\ar[r]&Y^{0,s+1}\ar[r]&X^{0,s}\ar[u]\ar[r]&Y^{0,s}\ar[r]\ar[u]&0\\
&0\ar[r]&P^{0,s}\ar@{=}[r]\ar[u]& P^{0,s}\ar[r]\ar[u]&0\\
&0\ar[r]&X^{1,s}\ar[r]\ar[u]& Y^{1,s}\ar[r]\ar[u]& Y^{0,s+1}\ar[r]&0\\
&&0\ar[u]&0\ar[u],
}\]
where $Y^{1,s}$ is defined to be the kernel of the surjection $P^{0,s}$ to
$Y^{0,s}$.  Thus, we have defined $Y^{1,j}$ such that (1) holds for all
$i=0$ and $j\ge 0$ and (2) holds
 for all $j\ge 1$ and $i=0$.  For $i=0$, (3) vacuously holds.

Now consider $0\to X^{1,s}\to Y^{1,s}\to Y^{0,s+1}\to 0$
Using the exact sequences
$0\to Y^{2,s}\to Q^{1,s}\to Y^{1,s}\to 0$ and
$0\to X^{1,s+1}\to P^{0,s+1}\to X^{0,s+1}\to 0$ and using the Horseshoe
Lemma, we obtain the following commutative diagram
\[
\xymatrix{
&0&0&0\\
0\ar[r]&X^{1,s}\ar[u]\ar[r]&Y^{1,s}\ar[r]\ar[u]&Y^{0,s+1}\ar[u]\ar[r]&0\\
0\ar[r]&P^{1,s}\ar[r]\ar[u]&  P^{1,s}\oplus Q^{0,s+1}\ar[r]\ar[u]&Q^{0,s+1}\ar[r]\ar[u]&0\\
	0\ar[r]&X^{2,s}\ar[r]\ar[u]& Y^{2,s}\ar[r]\ar[u]& Y^{1,s+1}\ar[u]\ar[r]&0\\
&0\ar[u]&0\ar[u]&0\ar[u],
}\]  where $Y^{2,s}$ is the kernel of $ P^{1,s}\oplus Q^{0,s+1}\to Y^{1,s}$.  Let
$Q^{1,s}= P^{1,s}\oplus Q^{0,s+1}$.  It is immediate that (1) holds for all $j\ge 0$,
(2) holds for all $j\ge 1$ and $i=1$, and (3) holds for $i=1$ and all $j\ge 0$.

Continuing in this fashion, we define the $X^{i,j}$ and $P^{i,j}$ for all $i,j\ge 0$ satisfying
(1), (2), and (3).
\end{proof}

\begin{cor}\label{fg-resol}
 Let $G$ be a group with identity element $\mathfrak e$
and $\Lambda=\oplus_{g\in G}\Lambda_g$ be
a properly $G$-graded ring in which graded idempotents lift.
Suppose that  $(e,f)$ is suitable idempotent pair
and set $\Lambda^*$ be the ring $f\Lambda
f$.   Assume that, as a left $\Lambda^*$-module, $f\Lambda e$ has
a finitely generated graded projective resolution.
Let $M$ be a graded $\Lambda$-module having a finitely generated graded
projective $\Lambda$-resolution.  Then $F(M)$ has a finitely generated graded
projective $\Lambda^*$-resolution.
\end{cor}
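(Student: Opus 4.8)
The plan is to apply the exact functor $F$ to a resolution of $M$ and then feed the result into Proposition~\ref{gen-fp}. Concretely, choose a graded projective $\Lambda$-resolution $\cdots \to P^1 \to P^0 \to M \to 0$ with each $P^n$ finitely generated. Since $F$ is exact (Proposition~\ref{FGbasic}(1)) and commutes with direct sums and with the shift functors, applying $F$ yields an exact sequence $\cdots \to F(P^1)\to F(P^0)\to F(M)\to 0$ of graded $\Lambda^*$-modules. By Proposition~\ref{gen-fp}, applied with $R=\Lambda^*$, it then suffices to prove that each $F(P^n)$ has a finitely generated graded projective $\Lambda^*$-resolution; this is the heart of the matter.

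To compute $F$ on projectives, recall $F(\Lambda[g]) = f\Lambda\otimes_\Lambda \Lambda[g] = f\Lambda[g]$, and by Proposition~\ref{FGbasic}(3) we have $f\Lambda\cong \Lambda^*\oplus f\Lambda e$ as graded left $\Lambda^*$-modules, so $F(\Lambda[g])\cong \Lambda^*[g]\oplus (f\Lambda e)[g]$. Thus $F$ carries any finitely generated graded free $\Lambda$-module to a finite direct sum of the modules $\Lambda^*[g]$ (finitely generated graded projective, hence trivially resolved) and $(f\Lambda e)[g]$ (a shift of $f\Lambda e$, which has a finitely generated graded projective $\Lambda^*$-resolution by hypothesis). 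A finite direct sum of modules admitting finitely generated graded projective resolutions again admits one, so $F$ of any finitely generated graded free $\Lambda$-module has a finitely generated graded projective $\Lambda^*$-resolution.

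To make every term an honest such direct sum, rather than merely a summand of one, I would first replace the given resolution of $M$ by one consisting of finitely generated graded \emph{free} modules. This is possible because the hypothesis says precisely that $M$ is of type $FP_\infty$: starting from a surjection onto $M$ from a finitely generated graded free module and iterating, each successive syzygy taken in a finitely generated free partial resolution is again of type $FP_\infty$, in particular finitely presented, so the next kernel is again finitely generated and the construction continues. With a finitely generated graded free resolution $\cdots\to L^1\to L^0\to M\to 0$ in hand, each $F(L^n)$ is an honest finite direct sum of copies of $\Lambda^*[g]$ and $(f\Lambda e)[g]$, hence has a finitely generated graded projective $\Lambda^*$-resolution by the previous paragraph, and Proposition~\ref{gen-fp} produces the desired resolution of $F(M)$.

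The one non-formal ingredient, and the step I expect to require the most care, is the passage from a finitely generated projective resolution to a finitely generated free one, equivalently the fact that the class of modules possessing a finitely generated graded projective resolution is closed under direct summands. Both rest on the stability of syzygies up to projective summands, that is, on a graded version of the generalized Schanuel lemma; once this standard fact is in place in the graded setting, the remainder of the argument is purely formal.
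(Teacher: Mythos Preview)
Your argument is correct and matches the paper's overall structure: apply the exact functor $F$ to a finitely generated graded projective resolution of $M$, then invoke Proposition~\ref{gen-fp}, reducing the problem to showing that each $F(P^n)$ admits a finitely generated graded projective $\Lambda^*$-resolution.

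The only difference is in how this last step is carried out. You pass to a finitely generated graded \emph{free} resolution via a Schanuel-type argument, so that each term is a finite direct sum of copies of $\Lambda[g]$ and hence $F$ of it decomposes visibly as copies of $\Lambda^*[g]$ and $(f\Lambda e)[g]$. The paper avoids this detour entirely: by Lemma~\ref{proj-cov}, any finitely generated graded projective $P^n$ already decomposes as $P^n_e\oplus P^n_f$, where $P^n_e$ belongs to $e$ and $P^n_f$ belongs to $f$ (this uses that, for a suitable idempotent pair, the images of $e$ and $f$ in $\Lambda/\br$ are central). Then $F(P^n_f)$ is graded projective over $\Lambda^*$ and $F(P^n_e)$ is a finite direct sum of shifts of $f\Lambda e$, which has a finitely generated resolution by hypothesis. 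So the ``non-formal ingredient'' you flag is not actually needed here; the standing hypotheses on $\Lambda$ give the decomposition for free. Your route is slightly more general in spirit (it would work over rings where projectives need not split so nicely), but in the present setting the paper's argument is shorter.
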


\begin{proof}
Let $M$ be a graded $\Lambda$-module and let $\cP: \cdots\to P^1\to P^0\to M\to 0$
be a finitely generated graded projective $\Lambda$-resolution of $M$.
Applying the exact functor $F$, we get an exact sequence graded $\Lambda^*$-modules
 $F(\cP): \cdots\to F(P^1)\to F(P^0)\to F(M)\to 0$.
The result will follow if we show that each $F(P^n)$ has a finitely generated graded projective
$\Lambda^*$-resolution.
  For each $n\ge 0$,
set $P^n=P^n_e\oplus P^n_f$, where  $P^n_e$ belongs to $e$ and $P^n_f$ belongs to $f$.
By our hypothesis, $F(P^n_e)$ has a finitely generated graded projective $\Lambda^*$-resolution.
Since $F(P^n_f)$ is a graded projective $\Lambda^*$ module and since $F(P^n)=F(P^n_e)
\oplus F(P^n_f)$ we are done.
\end{proof}

We can state the main theorem of this section.  If $\Lambda$ is a ring, then let
$\gk(\Lambda)$ denote the Gelfand-Krillov dimension of $\Lambda$ and
$\gldim(\Lambda)$ denote the  (left) global dimension of $\Lambda$.

\begin{thm}\label{bigthm}
 Let $G$ be a group with identity element $\mathfrak e$
and $\Lambda=\oplus_{g\in G}\Lambda_g$ be
a properly $G$-graded ring in which graded idempotents lift.
Assume that every graded simple $\Lambda$-module has a finitely generated minimal
graded projective
$\Lambda$-resolution.  Let $\br$ denote the graded Jacobson radical of $\Lambda$.
Suppose that  $(e,f)$ is a  suitable idempotent pair and set $\Lambda^*$ be the ring $f\Lambda
f$ and $\br^*=f\br f$.
Suppose that $\pd_{\Lambda^*}(f\Lambda e)<\infty$, 
$\pd_{\Lambda}((\Lambda/\br)  e)<\infty$, and
that $\id_{\Lambda}((\Lambda/\br) e)<\infty$.   
Then the following hold.
\begin{enumerate}
\item Assume that $f\Lambda e$ has a finitely generated minimal
graded projective
$\Lambda^*$-resolution. 
The cohomology ring $E(\Lambda)$ is finitely generated over\linebreak
$\Ext^0_{\Lambda}(\Lambda/\br,\Lambda/\br)\cong \Hom_{\Lambda}(\Lambda/\br,\Lambda/\br)
\cong (\Lambda/\br)^{op}$ if and only if the cohomology ring $E(\Lambda^*)$ is finitely generated
as a $(\Lambda^*/\br^*)^{op}$-algebra. 
\item Assume that $\Lambda$ is $K$-algebra, where $K$ is a field and that $\Lambda/\br$ is a finite dimensional $K$-algebra.   Assume further that both $E(\Lambda)$ and $E(\Lambda^*)$
are finitely generated $K$-algebras.  Then
$\gk(E(\Lambda))
=\gk(E(\Lambda^*))$.
\item  We have that $\pd_{\Lambda}(S)<\infty$, for all
graded simple $\Lambda$-modules $S$ if and only if $\pd_{\Lambda^*}(S^*)<\infty$, for all
graded simple $\Lambda^*$-modules $S^*$.  
\end{enumerate}

\end{thm}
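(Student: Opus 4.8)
The proof rests on Theorem \ref{grG-to-ss-thm}, which produces an eventual isomorphism of the two cohomology rings, together with the growth and transfer results already established in this section. Write $a=\pd_\Lambda((\Lambda/\br)e)$, $b=\id_\Lambda((\Lambda/\br)e)$ and $c=\pd_{\Lambda^*}(f\Lambda e)$, all finite by hypothesis, and set $N=\max\{a,b+c+2\}$. By Theorem \ref{grG-to-ss-thm} there is an isomorphism of $\mathbb Z\times G$-graded rings without identity
\[ \bigoplus_{n>N}\Ext^n_{\Lambda}(\Lambda/\br,\Lambda/\br)\cong \bigoplus_{n>N}\Ext^n_{\Lambda^*}(\Lambda^*/\br^*,\Lambda^*/\br^*), \]
so $E(\Lambda)$ and $E(\Lambda^*)$ coincide in all sufficiently large cohomological degrees. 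Each of the three assertions will be deduced by transferring a property that depends only on this tail across the isomorphism.

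For (1), I would first verify that $E(\Lambda)$ and $E(\Lambda^*)$, graded by cohomological degree, satisfy the hypotheses of Corollary \ref{eventual}. Their degree-zero parts $(\Lambda/\br)^{op}$ and $(\Lambda^*/\br^*)^{op}$ are semisimple Artin algebras over $C$, and because every graded simple $\Lambda$-module has a finitely generated minimal resolution, each $\Ext^n_\Lambda(\Lambda/\br,\Lambda/\br)$ has finite length over $C$. On the $\Lambda^*$ side the corresponding finiteness comes from Corollary \ref{fg-resol}: the hypothesis that $f\Lambda e$ has a finitely generated graded projective $\Lambda^*$-resolution forces $F(S)=S^*$ to have one for every graded simple $\Lambda$-module $S$ belonging to $f$, and these exhaust the graded simple $\Lambda^*$-modules. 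Corollary \ref{eventual} then says each ring is finitely generated over $C$ if and only if its tail $\oplus_{n>N}$ is finitely generated as a ring without identity, and the two tails are isomorphic by the displayed isomorphism. Finally, since $(\Lambda/\br)^{op}$ and $(\Lambda^*/\br^*)^{op}$ are module-finite over $C$, finite generation as an algebra over the degree-zero ring is equivalent to finite generation as a $C$-algebra; chaining these equivalences yields (1).

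For (2), I would reduce the Gelfand--Kirillov dimension to the growth of the graded pieces. For a finitely generated $\mathbb N$-graded $K$-algebra with finite-dimensional components, $\gk(E(\Lambda))$ is governed by $\limsup_n\log\bigl(\sum_{i\le n}\dim_K\Ext^i_\Lambda(\Lambda/\br,\Lambda/\br)\bigr)/\log n$, and similarly for $E(\Lambda^*)$. Since the isomorphism of Theorem \ref{grG-to-ss-thm} is one of $\mathbb Z\times G$-graded spaces, $\dim_K\Ext^n_\Lambda(\Lambda/\br,\Lambda/\br)=\dim_K\Ext^n_{\Lambda^*}(\Lambda^*/\br^*,\Lambda^*/\br^*)$ for all $n>N$. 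The two partial-sum functions therefore differ only by the fixed finite contribution from degrees $\le N$, which does not affect the $\limsup$, so the two Gelfand--Kirillov dimensions agree.

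For (3), I would split the graded simple $\Lambda$-modules into those belonging to $e$ and those belonging to $f$. Because $\pd_\Lambda((\Lambda/\br)e)=a<\infty$, every graded simple module belonging to $e$ has finite projective dimension over $\Lambda$, independently of the $\Lambda^*$ situation. The functor $F=f\Lambda\otimes_\Lambda-$ annihilates the simples belonging to $e$ (as $f\Lambda e\subseteq\br$ gives $fS=0$) and carries the simples belonging to $f$ bijectively, up to shift, onto the graded simple $\Lambda^*$-modules. For $S$ belonging to $f$, Proposition \ref{pd-simple} gives $\pd_\Lambda(\Phi(S))<\infty$ if and only if $\pd_{\Lambda^*}(\Phi(F(S)))<\infty$. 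Hence if all graded simple $\Lambda$-modules have finite projective dimension, then in particular those belonging to $f$ do, so all graded simple $\Lambda^*$-modules do; conversely, if all graded simple $\Lambda^*$-modules have finite projective dimension, then the simples belonging to $f$ have finite projective dimension over $\Lambda$ by Proposition \ref{pd-simple}, while those belonging to $e$ do by the hypothesis on $(\Lambda/\br)e$, giving the equivalence. The main obstacle I anticipate is bookkeeping rather than conceptual: one must check carefully that the finiteness needed to invoke Corollary \ref{eventual} genuinely holds on the $\Lambda^*$ side—precisely where Corollary \ref{fg-resol} and the hypothesis on $f\Lambda e$ enter—and that finite generation over the semisimple degree-zero ring is interchangeable with finite generation over $C$; once these reductions are in place, each part follows formally from the eventual isomorphism.
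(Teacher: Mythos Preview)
Your proposal is correct and follows essentially the same route as the paper's own proof: the eventual ring isomorphism from Theorem \ref{grG-to-ss-thm}, Corollary \ref{fg-resol} to secure finite length of each $\Ext^n_{\Lambda^*}(\Lambda^*/\br^*,\Lambda^*/\br^*)$ over $C$, Corollary \ref{eventual} to pass between finite generation of a graded ring and of its tail, the definition of GK dimension for part (2), and Proposition \ref{pd-simple} together with the splitting of simples into those belonging to $e$ and to $f$ for part (3). Your write-up in fact fills in details that the paper leaves implicit (the equivalence between finite generation over $C$ and over the semisimple degree-zero ring, and the vanishing $F(S)=0$ for $S$ belonging to $e$ via $f\Lambda e\subseteq \br$), but the architecture is identical.
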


\begin{proof} Suppose $C$ is a commutative Artin algebra over which $\Lambda/\br$ has finite
length.  Note that if $S^*$ is a graded simple $\Lambda^*$-module, then there exists a 
graded simple $\Lambda$-module $S$ such that $S^*\cong F(S)$.    By Corollary \ref{fg-resol} and
our assumptions,  it
follows every graded simple $\Lambda^*$-module has a finitely generated graded projective
$\Lambda^*$-resolution.  In particular, for $n\ge 0$,  $\Ext_{\Lambda^*}^n(
\Lambda^*/\br^*,\Lambda^*/\br^*)$ has finite length over $C$. 
Part (1) follows from Theorems \ref{grfirst-half} and
\ref{grG-to-ss-thm}, Proposition \ref{large-gen}, and Corollary
\ref{eventual}.
Part (2) follows from the definition of Gelfand-Kirillov dimension and
Theorem \ref{grG-to-ss-thm}. 
For Part (3), the proof is basically
given in the proof of Proposition \ref{pd-simple}.

\end{proof}

Applying these results the Artin algebra case we get the following
Corollary.

\begin{cor}\label{bigcor}
Let $\Lambda$ be an Artin algebra.
Let $\br$ denote the graded Jacobson radical of $\Lambda$.
Suppose that  $(e,f)$ is a suitable idempotent pair in
$\Lambda$ and set $\Lambda^*$ be the ring $f\Lambda
f$ and $\br^*=f\br f$.
Suppose that $\pd_{\Lambda^*}(f\Lambda e)<\infty$, 
$\pd_{\Lambda}((\Lambda/\br)  e)<\infty$, and
that $\id_{\Lambda}((\Lambda/\br) e)<\infty$.   
Then the following hold.
\begin{enumerate}
\item 
The cohomology ring $E(\Lambda)$ is finitely generated over
$\Ext^0_{\Lambda}(\Lambda/\br,\Lambda/\br)\cong \Hom_{\Lambda}(\Lambda/\br,\Lambda/\br)
\cong (\Lambda/\br)^{op}$ if and only if the cohomology ring $E(\Lambda^*)$ is finitely generated
as a $(\Lambda^*/\br^*)^{op}$-algebra. 
\item Assume that $\Lambda$ is a finite dimensional $K$-algebra, where $K$ is a field. 
Assume further that both $E(\Lambda)$ and $E(\Lambda^*)$ are finitely generated
rings.  Then
$\gk(E(\Lambda))
=\gk(E(\Lambda^*)$.
\item  We have that $\gldim(\Lambda)$ is finite if and only if $\gldim(\Lambda^*)$ is finite.  
\end{enumerate}

\end{cor}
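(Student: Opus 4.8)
The plan is to deduce everything from Theorem \ref{bigthm} by regarding the Artin algebra $\Lambda$ as a $G$-graded ring for the trivial group $G=\{\mathfrak e\}$, as in Example \ref{artin-alg}; with this grading $\Gr(\Lambda)$ coincides with $\Mod(\Lambda)$, ``graded simple'' means ``simple'', the grading is proper, and graded idempotents lift. First I would verify the standing hypotheses of Theorem \ref{bigthm}. Since $\Lambda$ is an Artin algebra it is left Noetherian, so every finitely generated module---in particular every simple module---admits a finitely generated minimal projective resolution, which is the blanket assumption of that theorem. The three finiteness conditions on $f\Lambda e$ and $(\Lambda/\br)e$ are hypotheses of the corollary. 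Moreover $\Lambda^*=f\Lambda f$ is again an Artin algebra and $f\Lambda e$ is a finitely generated $\Lambda^*$-module, so $f\Lambda e$ automatically has a finitely generated minimal graded projective $\Lambda^*$-resolution; this is why the extra hypothesis appearing in Theorem \ref{bigthm}(1) may be dropped here. Granting these verifications, Part (1) is precisely Theorem \ref{bigthm}(1), and Part (2) is precisely Theorem \ref{bigthm}(2), using that $\Lambda/\br$ is finite dimensional over $K$ whenever $\Lambda$ is.

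The substance lies in Part (3), where I must convert the statement of Theorem \ref{bigthm}(3)---phrased in terms of projective dimensions of all simple modules---into one about global dimension. Here I would invoke the classical identity $\gldim(\Lambda)=\pd_\Lambda(\Lambda/\br)$ valid for left Artinian rings; since $\Lambda/\br$ is semisimple it is the direct sum of the finitely many simple $\Lambda$-modules, so $\pd_\Lambda(\Lambda/\br)=\max_S\pd_\Lambda(S)$. Hence $\gldim(\Lambda)<\infty$ if and only if $\pd_\Lambda(S)<\infty$ for every simple $\Lambda$-module $S$, and the analogous equivalence holds for $\Lambda^*$. As every simple $\Lambda^*$-module has the form $S^*=F(S)$ for some simple $\Lambda$-module $S$ (recorded in the proof of Theorem \ref{bigthm}), Theorem \ref{bigthm}(3) supplies the middle link: $\pd_\Lambda(S)<\infty$ for all simple $S$ if and only if $\pd_{\Lambda^*}(S^*)<\infty$ for all simple $S^*$. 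Chaining these three equivalences yields $\gldim(\Lambda)<\infty \Leftrightarrow \gldim(\Lambda^*)<\infty$, which is Part (3).

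As a specialization the argument poses no genuine difficulty; the only step requiring care is the passage from global dimension to projective dimensions of simples, which rests on the semiprimary structure of Artin algebras. I would therefore state the identity $\gldim(\Lambda)=\pd_\Lambda(\Lambda/\br)$ explicitly (or cite it) rather than leaving it implicit, and likewise confirm that having only finitely many simple modules makes ``$\pd_\Lambda(S)<\infty$ for all $S$'' equivalent to ``$\sup_S\pd_\Lambda(S)<\infty$''.
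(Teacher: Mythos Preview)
Your proposal is correct and follows essentially the same route as the paper: take $G$ to be the trivial group as in Example~\ref{artin-alg}, verify that the standing hypotheses of Theorem~\ref{bigthm} hold automatically for an Artin algebra (proper grading, lifting of idempotents, finitely generated resolutions for simples and for $f\Lambda e$), and then read off the three parts. The only difference is that you spell out the passage in Part~(3) from ``$\pd_\Lambda(S)<\infty$ for all simple $S$'' to ``$\gldim(\Lambda)<\infty$'' via the identity $\gldim(\Lambda)=\pd_\Lambda(\Lambda/\br)$, whereas the paper leaves this implicit in the phrase ``direct consequence''; your added care here is appropriate.
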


\begin{proof}  We take $G$ to be the trivial group and view $\Lambda$ as a graded algebra.
Then the grading is proper and graded idempotents lift.  Every (graded) simple $\Lambda$-module and
(graded) simple $\Lambda^*$-module has a finitely generated projective resolution, as does
$f\Lambda e$.  The result is now a direct consequence of Theorem \ref{bigthm}.
\end{proof}

\section{Concluding remarks and examples}\label{rmk-ex}

We begin this section with a discussion of the construction of $\Lambda^*=f\Lambda f$ in
case $\Lambda$ is a quotient of a path algebra. We keep the notation of 
Example \ref{Gfinite}; namely,
let $K$ be a field,
$\cQ$ be a finite quiver,  $G$ a group, $W\colon \cQ_1\to G\setminus
\{\mathfrak e\}$ be a
weight function, and $I$ a graded ideal in the path algebra
$K\cQ$ generated by weight homogeneous elements.
We also assume that $I$ is contained in the ideal of $K\cQ$ 
generated by the arrows of $\cQ$.   Setting $\Lambda=K\cQ/I$, 
the $G$-grading on $K\cQ$ obtained
from $W$ induces a proper $G$-grading on $\Lambda$ such that
graded idempotents lift.
 
To simplify notation, if $x\in K\cQ$, then we denote the element $x +I$ of
$\Lambda$ by $\overline{x}$.   We wish to describe
$\Lambda^*=f\Lambda f$, where $f =\sum_{v\in X}\overline{v}$
and $X$ is a subset of the vertex set $\cQ$.  Set
$e=\sum_{v\in \cQ_0\setminus X}\overline{v}$. We keep the
notation that $\br$ is the graded Jacobson radical of $\Lambda$ and
$\br^*=f\br f$.  Note that $\br$ is generated all elements
of the form $\overline a$, for $a\in\cQ_1$, $\Lambda^*$ has 
an $G$-grading induced from the $G$-grading on $\Lambda$,
and $\br^*$ is the graded Jacobson radical of $\Lambda^*$.  
Furthermore, $\Lambda/\br\cong \prod_{v\in \cQ_0}K$ and
$\Lambda^*/\br^*\cong \prod_{v\in X}K$

 We define the quiver $\cQ^*$ as follows.  Let 
$\cQ^*_0=X$. To define the set of arrows $\cQ^*_1$ 
consider set of paths $\cM$ in $\cQ$ such that $p\in \cM$ if
 $p$ is a path of length $n$, $n\ge 1$ in $\cQ$
such that
$p=u_1\stackrel{a_1}{\to}u_2\stackrel{a_2}{\to}u_3\to\cdots\to
u_{n}\stackrel{a_{n}}{\to}u_{n+1}$, with $u_i$ belonging to $e$ for
$i=2,\dots,n$ and $u_1$ and $u_{n+1}$ belonging to $f$. Note that a vertex
$u$ belongs to $f$ (resp., to $e$) just means $u\in X$ (resp., $u\not\in X$).  Then  $
\cQ^*_1=\{a_p \mid p\in \cM, p \text{ is a path from } u_1\text{ to }
u_{n+1}\}$.   A  path $p\in \cM$
is called a \emph{minimal $f$-path} and the arrow $a_p$ in $\cQ^*$ is called
the \emph{arrow in $\cQ^*$ associated to minimal $f$-path $p$}.
We note that if $a\colon u\to v$ is an arrow with $u$ and $v$ 
belonging to $f$, then $a$ is a minimal $f$-path.  It is also easy
to see that if $p$ is a path in $Q$ from vertex $u$ to vertex $v$
with $u$ and $v$ belonging to $f$,  then $p$ can be uniquely
written as a product of paths $p_1\cdots p_m$, where each $p_i$
is a minimal $f$-path.

We now turn our attention to relations.
 Let
$I^*$ be the ideal in $K\cQ^*$ generated as follows.  If
$r\in I$ is an  element with $r=urv$, where $u$ and $v$ are vertices belonging
to $f$ and $r=\sum_ic_ip_i$, where $c_i\in K$ and $p_i$ is a path
from $u$ to $v$, then we set $r^*$ to be $\sum_ic_ip_i^*$ where
$p_i^*$ is the path in $\cQ^*$ obtained from $p_i$ by replacing each
minimal $f$-subpath $p$ in $p_i$ by $a_p$.  Note that if
a minimal $f$ is in $I$, then associated arrow is in $I^*$.  We also
note that although $\cQ$ is a finite quiver and $\cQ^*_0$ is a
finite set, $\cQ^*$ may have an infinite number of arrows.    The next example
demonstrates this and that even if $I$ is an ideal in $K\cQ$, finitely generated 
by homogeneous elements, $I^*$ need not be finitely generated.   

\begin{Example}\label{ex41}{\rm  Let $\cQ$ be the quiver
\[\xymatrix{
\stackrel{u}{\circ}\ar_a[r]&\stackrel{v}{\circ}\ar@(r,u)[]^b\ar_c[r]&\stackrel{w}{\circ}
}\]
Take $e=v$ and $f=u+w$.  It is not hard to see that  each path of the form
$cb^na$, $n\ge 0$ is a minimal $f$-path and that these are the only minimal  
$f$-paths.  Hence,  $\cQ^*$ is the quiver with two vertices $u$ and $w$, and a 
countable number of arrows $a_{ca}, a_{cba}, a_{cb^2a},\dots$, each starting at
$u$ and ending at $w$.

Let $W\colon \cQ_1\to \mathbb Z_{> 0}$ by $W(a)=W(b)=W(c)=1$ and $I$ be the
ideal in $K\cQ$ generated by $b^2$.  Set $\Lambda=K\cQ/I$ and $\Lambda^*=K\cQ^*/I^*$.
Then $\Lambda^*=f \Lambda f$ and $I^*=fIf$.  We have  $I^*$ is generated by
$\{a_{cb^na}\mid n\ge 2\}$.   Note that both $\Lambda$ and $\Lambda^*$ are
Artin algebras.  Note that $\pd_{\Lambda}((\Lambda/\br) e)= \id_{\Lambda}((\Lambda/\br) e)
=\infty$, where $\br$ is the graded Jacobson radical
of $\Lambda$.  Moreover, $\gldim(\Lambda^*)=1$.  This example shows that the finiteness
of the projective or injective dimension of $(\Lambda/\br)e$ is necessary in Colloray \ref{bigcor}.

If we take $I=(0)=I^*$ above, then both $\Lambda =K\cQ$ and $\Lambda^*=K\cQ^*$ are hereditary algebras.   Hence Theorem \ref{grG-to-ss-thm} holds; in fact,
the $\Ext^n$'s are 0 for $n\ge 2$.  But $\Ext^1_{\Lambda^*}(\Lambda^*/\br^*,\Lambda^*/\br^*)$ is
infinite dimensional and  hence Theorem \ref{bigthm} fails.  Note that $f\Lambda e$ does not have
a finitely generated graded projective $\Lambda^*$-resolution.
}\end{Example}

We leave the proof of the following result to the reader.

\begin{prop}\label{newquiver} Keeping the notation above,
$\Lambda^*\cong K\cQ^*/I^*$.
\qed
\end{prop}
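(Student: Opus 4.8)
The plan is to realize $\Lambda^*=f\Lambda f$ as the corner subalgebra of $\Lambda=K\cQ/I$ cut out by $f$, and to match it term by term with $K\cQ^*/I^*$. First I would work one level up, inside $K\cQ$ itself. Let $fK\cQ f$ denote the corner subalgebra of $K\cQ$ determined by the idempotent $f=\sum_{v\in X}v\in K\cQ$; it is $f$-unital, and its natural $K$-basis consists of those paths $p$ in $\cQ$ with both endpoints in $X$, i.e.\ with $fpf=p$. Define a $K$-algebra homomorphism $\Psi\colon K\cQ^*\to fK\cQ f$ by sending each vertex $u\in\cQ^*_0=X$ to the vertex $u$ and each arrow $a_p\in\cQ^*_1$ to the associated minimal $f$-path $p$. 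Since $K\cQ^*$ is a path algebra, $\Psi$ is well defined and is an algebra map: $a_p$ and $p$ share source and target in $X$, and composition of arrows in $\cQ^*$ corresponds to concatenation of the associated minimal $f$-paths. The existence-and-uniqueness statement recorded just before Proposition \ref{newquiver} — every path in $\cQ$ with endpoints in $X$ factors \emph{uniquely} as a product of minimal $f$-paths — says exactly that $\Psi$ carries the path basis of $K\cQ^*$ bijectively onto the path basis of $fK\cQ f$. Hence $\Psi$ is an isomorphism of $K$-algebras.

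Next I would push everything down to $\Lambda$. Let $\rho\colon K\cQ\to\Lambda$ be the canonical surjection, so $\ker\rho=I$. Because $\rho$ is onto and $\rho(f)$ is the idempotent $f$ of $\Lambda$, we have $f\Lambda f=\rho(fK\cQ f)$, and restricting $\rho$ yields $f\Lambda f\cong fK\cQ f/(I\cap fK\cQ f)$. Composing with $\Psi$, it therefore suffices to prove the single identity $\Psi(I^*)=I\cap fK\cQ f$; granting this, $\Psi$ descends to an isomorphism $K\cQ^*/I^*\cong fK\cQ f/(I\cap fK\cQ f)\cong f\Lambda f=\Lambda^*$, which is the assertion.

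The heart of the matter is that identity, and I would note first that $I\cap fK\cQ f$ is a two-sided ideal of $fK\cQ f$. For one inclusion, each generator $r^*$ of $I^*$ satisfies $\Psi(r^*)=r\in I\cap fK\cQ f$ by the very definition of $r^*$ (replacing each minimal $f$-subpath by the corresponding arrow is inverse to $\Psi$ on $fK\cQ f$); since $\Psi$ is an algebra isomorphism onto $fK\cQ f$, it carries the ideal $I^*$ onto the ideal of $fK\cQ f$ generated by the elements $r$, which lies inside $I\cap fK\cQ f$. For the reverse inclusion, take $s\in I\cap fK\cQ f$. Since $s=fsf$ with $f=\sum_{v\in X}v$, I can split $s=\sum_{u,v\in X}usv$; each summand $usv$ again lies in $I$, because $I$ is a two-sided ideal of $K\cQ$, and it has the vertex-homogeneous form $usv=u(usv)v$ demanded by the definition of $I^*$, so $(usv)^*=\Psi^{-1}(usv)$ is a generator of $I^*$. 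Hence $\Psi^{-1}(s)=\sum_{u,v}(usv)^*\in I^*$, giving $s\in\Psi(I^*)$ and completing the equality.

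The only genuine obstacle is this last inclusion, and it turns on using the two-sidedness of $I$ to chop an arbitrary element of $I\cap fK\cQ f$ into the vertex-homogeneous pieces $usv$ out of which $I^*$ is built; once that is in hand, the rest is the formal transport of an ideal across the algebra isomorphism $\Psi$. I would also make sure, when checking that $\Psi$ is an isomorphism, to cover the degenerate cases — the length-zero paths (the vertices of $X$) and the minimal $f$-paths that happen to lie in $I$, the latter being precisely the remark that such an $f$-path forces its arrow $a_p$ into $I^*$.
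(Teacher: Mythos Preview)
Your proof is correct. The paper gives no argument for this proposition --- it is stated with a bare \qed{} and preceded by ``We leave the proof of the following result to the reader'' --- so there is nothing to compare; your approach (building the algebra isomorphism $\Psi\colon K\cQ^*\to fK\cQ f$ from the unique factorization of $f$-to-$f$ paths into minimal $f$-paths, and then checking $\Psi(I^*)=I\cap fK\cQ f$ by slicing an arbitrary element of $I\cap fK\cQ f$ into its vertex-homogeneous pieces $usv$) is exactly the natural one the authors had in mind.
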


In the quiver case where $e$ is a idempotent associated to a vertex,
the next result gives a sufficient condition for exactness of the
functor $H$ where $H\colon \Gr(\Lambda^*)\to\Gr(\Lambda)$ given by
$H(X)=\Hom_{\Lambda^*}(f\Lambda, X)$ (see Section \ref{main result}).

\begin{prop}\label{Gexact}  Let $K\cQ/I$ be a finite dimensional $K$-algebra
where $K$ is field and $I$ is an admissible ideal in the path algebra
$K\cQ$; that is, for some $n\ge 2$, $J^n\subseteq I\subseteq
J^2$ where $J$ is the ideal generated by the arrows of $\cQ$.  Assume that $K\cQ$ is $G$-graded with the grading coming
from a weight function $W\colon \cQ_1\to G$ and that $I$ can be
generated by homogeneous elements.  Let $e$ be an idempotent element
of $K\cQ$ associated to a vertex $v$.  If $\pd_{\Lambda}((\Lambda/\br)e)\le 1$, then $\pd_{\Lambda^*}(f\Lambda e)\le 1$ and
$H$ is exact.
\end{prop}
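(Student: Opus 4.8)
The plan is to analyze the first syzygy of the graded simple module at $v$ and transport it through the functor $F=f\Lambda\otimes_{\Lambda}-$. Write $e=e_v$, let $P_v=\Lambda e$ be the (graded) projective cover of the graded simple module $S_v=(\Lambda/\br)e$, and recall that $\br e=\br P_v$ is the first syzygy $\Omega^1_{\Lambda}(S_v)$, sitting in the graded short exact sequence $0\to \br e\to P_v\to S_v\to 0$. Since $\dim_K S_v=1$, the hypothesis $\pd_{\Lambda}((\Lambda/\br)e)\le 1$ says exactly that $\br e$ is graded projective. Because $I$ is admissible, $\br e/\br^2 e\cong\bigoplus_{a}S_{t(a)}[g_a]$, the sum running over the arrows $a$ of $\cQ$ with source $v$ (for suitable $g_a\in G$), so the graded projective cover of $\br e$ is $\bigoplus_a P_{t(a)}[g_a]$; projectivity of $\br e$ then upgrades this to an isomorphism $\br e\cong\bigoplus_a P_{t(a)}[g_a]$.

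First I would show that $\cQ$ has no loop at $v$. If $a\colon v\to v$ were a loop, it would contribute a summand $P_v[g_a]$ to the decomposition above, forcing $\dim_K(\br e)\ge \dim_K P_v$; but from the defining sequence $\dim_K(\br e)=\dim_K P_v-\dim_K S_v=\dim_K P_v-1<\dim_K P_v$, a contradiction (here finite-dimensionality of $\Lambda$ is used). Hence every arrow at $v$ has target $w\ne v$, so each indecomposable summand $P_{t(a)}$ of $\br e$ belongs to $f$.

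Next I would apply the exact functor $F$ to $0\to \br e\to P_v\to S_v\to 0$. Using the natural isomorphism $F(M)\cong fM$, we get $F(S_v)\cong fS_v=0$ since $S_v$ is supported at $v$, so $F$ produces an isomorphism $f\Lambda e=F(P_v)\cong F(\br e)\cong\bigoplus_a F(P_{t(a)})[g_a]$. For each arrow $a$ the target satisfies $t(a)\ne v$, so $e_{t(a)}=fe_{t(a)}$ and therefore $F(P_{t(a)})=f\Lambda e_{t(a)}=(f\Lambda f)e_{t(a)}=\Lambda^* e_{t(a)}$ is graded projective over $\Lambda^*$. Thus $f\Lambda e$ is a finite direct sum of graded projective $\Lambda^*$-modules, whence $f\Lambda e$ is graded projective and $\pd_{\Lambda^*}(f\Lambda e)=0\le 1$. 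Finally, as recalled after Proposition \ref{FGbasic}, $H$ is exact if and only if $f\Lambda e$ is a projective left $\Lambda^*$-module, so $H$ is exact.

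I expect the genuine obstacle to be the loop-exclusion step: if some arrow at $v$ returned to $v$, then $F(P_{t(a)})$ would reproduce the module $f\Lambda e$ under study instead of a $\Lambda^*$-projective, and the argument would stall. The dimension count settles this cleanly and is precisely where the hypothesis $\pd_{\Lambda}((\Lambda/\br)e)\le 1$ together with finite-dimensionality is used; the remainder is formal bookkeeping with the exact functor $F$, its compatibility with shifts and direct sums, and the identity $F(M)\cong fM$.
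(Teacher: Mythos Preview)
Your proof is correct and follows the paper's approach almost exactly: analyze the first syzygy $\br e$ of the simple at $v$, decompose it as a direct sum of indecomposable projectives indexed by arrows leaving $v$, rule out loops at $v$, apply $F$ to the short exact sequence $0\to\br e\to\Lambda e\to(\Lambda/\br)e\to 0$, and conclude that $f\Lambda e$ is $\Lambda^*$-projective (so in fact $\pd_{\Lambda^*}(f\Lambda e)=0$, not merely $\le 1$, as you note). The one point of difference is the no-loop step: the paper invokes \cite{ILP} (the strong no-loop conjecture) to exclude a loop at $v$, whereas you give an elementary dimension count, observing that a loop would force a summand $P_v[g_a]$ inside $\br e$ while $\dim_K\br e=\dim_K P_v-1$. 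Your argument is more self-contained and precisely suited to the hypothesis $\pd_{\Lambda}((\Lambda/\br)e)\le 1$; the full strength of \cite{ILP} would only be needed under the weaker hypothesis of finite projective dimension.
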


\begin{proof}   By \cite{ILP}, there is no loop at $v$.  Let $e$ be
the idempotent in $K\cQ$ associated to the vertex $v$ and let
$f=1-e$.
 Consider the short exact sequence $0\to \br e\to \Lambda
e\to (\Lambda/\br)e\to 0$. Applying the functor $F$, we obtain
\[\xymatrix{
0\ar[r]&f\Lambda\otimes_{\Lambda}\br e\ar[r]\ar[d]^{\cong}&
f\Lambda\otimes_{\Lambda}\Lambda
e\ar[r]\ar[d]^{\cong}&f\Lambda\otimes_{\Lambda}(\Lambda/\br)e\ar[r]\ar[d]^{\cong}&0\\
&f\br e&f\Lambda e&0 }\] It follows that $f\Lambda e\cong f\br e$.
Since $\pd_{\Lambda}(\Lambda/\br)e\le 1$, $\br e\cong \oplus \Lambda
w$  where the direct sum runs over the arrows $v\to w$ in $\cQ$   and
$w$ belongs to $f$, and where $\Lambda
w$ is the projective $\Lambda$-module associated to the
vertex $w$.  Since each $w$
belongs to $f$, it follows that $f\Lambda w =f\Lambda fw=\Lambda^*w$,
which is a projective $\Lambda^*$-module.  Thus $f\Lambda e)$ is
a projective $\Lambda^*$-module and by the remark after Proposition
\ref{FGbasic}, $H$ is exact.
\end{proof}

 Let $K\cQ/I$ be a finite dimensional $K$-algebra
where $K$ is field and $I$ is an admissible ideal in the path algebra
$K\cQ$.  Assume that $K\cQ$ is $G$-graded with the grading coming
from a weight function $W\colon \cQ_1\to G$ and that $I$ can be
generated by homogeneous elements.  Let $e$ be an idempotent element
of $K\cQ$ associated to a vertex $v$.  As usual let $f=1-e$.
It is well known that $\pd_{\Lambda}(\Lambda/\br)e\le 1$ if and only
if there exists a
uniform set $\rho$ of generators of $I$ such that $gv=0$ for all $g\in\rho$, where an
element $r\in K\cQ$ is \emph{uniform} if there exist vertices $u$ and
$v$ in $\cQ$ such that $r=urv$.  Thus,
if there exists a
uniform set $\rho$ of generators of $I$ such that $g v=0$ for all $g\in \rho$ and
if $\id_{\Lambda}(\Lambda/\br e)<\infty$ , then Theorems
\ref{grG-to-ss-thm} and \ref{bigthm} hold. 

The next result gives sufficient conditions so that
$pd_{\Lambda^*}(f\Lambda e)<\infty$.  

\begin{prop}\label{pd-finite}
 Let $G$ be a group
and $\Lambda=\oplus_{g\in G}\Lambda_g$ be
a properly $G$-graded ring in which graded idempotents lift.
Suppose that  $(e,f)$ is a suitable
 idempotent pair
and set $\Lambda^*$ be the ring $f\Lambda
f$.  
  Suppose that 
$0\to P^n\to\cdots\to P^1\to P^0\to(\Lambda/\br)e\to 0$
is a minimal graded 
projective $\Lambda$-resolution of $(\Lambda/\br)e$ and 
that each $P^i$, for $i\ge 1$, is a direct sum of indecomposable
projective $\Lambda$-modules
of the form $\Lambda w$ with $w$ a vertex belonging to $f$.  Then
$\pd_{\Lambda^*}(f\Lambda e)<\infty$.
\end{prop}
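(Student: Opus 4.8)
The plan is to apply the exact functor $F=f\Lambda\otimes_{\Lambda}-$ to the given finite resolution of $(\Lambda/\br)e$ and simply read off a finite projective $\Lambda^*$-resolution of $f\Lambda e$. Two facts drive the argument: that $F$ \emph{annihilates} $(\Lambda/\br)e$, and that $F$ carries the tail $P^1,\dots,P^n$ (which belongs to $f$) to projective $\Lambda^*$-modules. The genuinely substantive observation is the first one; the rest is formal bookkeeping combining exactness of $F$ with Lemma \ref{grFresol}.

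First I would record the vanishing. For any left $\Lambda$-module $X$ there is a natural isomorphism $F(X)\cong fX$ (the standard identification $f\Lambda\otimes_{\Lambda}X\cong fX$). Since $(e,f)$ is a suitable idempotent pair we have $f\Lambda e\subseteq\br$, and any element of $(\Lambda/\br)e$ lifts to some $\lambda e\in\Lambda e$; left multiplication by $f$ then lands in $f\Lambda e\subseteq\br$, so its image in $\Lambda/\br$ is $0$. Hence
\[
F((\Lambda/\br)e)\cong f(\Lambda/\br)e=0.
\]
I would also note that the $0$-th term is the projective cover $P^0=\Lambda e$ of $(\Lambda/\br)e$, so that $F(P^0)=F(\Lambda e)\cong f\Lambda e$.

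Next I would apply $F$ to the exact sequence $0\to P^n\to\cdots\to P^1\to P^0\to(\Lambda/\br)e\to 0$. By Proposition \ref{FGbasic}(1) the functor $F$ is exact, and by the vanishing above the final term drops out, leaving an exact sequence of graded $\Lambda^*$-modules
\[
0\to F(P^n)\to\cdots\to F(P^1)\to F(P^0)\to 0.
\]
Identifying $F(P^0)\cong f\Lambda e$, this is precisely a resolution
\[
0\to F(P^n)\to\cdots\to F(P^2)\to F(P^1)\to f\Lambda e\to 0
\]
of $f\Lambda e$ of length $n-1$. By hypothesis each $P^i$ with $i\ge 1$ is a direct sum of indecomposable projectives $\Lambda w$ with $w$ belonging to $f$; hence each $P^i$ belongs to $f$, and by the proof of Lemma \ref{grFresol}(1) each $F(P^i)$ is a graded projective $\Lambda^*$-module. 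The displayed sequence is therefore a finite graded projective $\Lambda^*$-resolution of $f\Lambda e$, so $\pd_{\Lambda^*}(f\Lambda e)\le n-1<\infty$.

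I do not anticipate any real obstacle: the one point requiring care is the identification of the endpoints, namely that $P^0=\Lambda e$ with $F(\Lambda e)\cong f\Lambda e$, and that $F$ kills $(\Lambda/\br)e$ so that the $0$-th spot of the transformed complex becomes $f\Lambda e$ itself. Once this is in place, exactness of $F$ together with the already-established preservation of projectivity for modules belonging to $f$ delivers the finite resolution immediately.
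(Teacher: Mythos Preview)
Your argument is correct and is essentially identical to the paper's own proof: both apply the exact functor $F=f\Lambda\otimes_{\Lambda}-$ to the given finite resolution, note that $F((\Lambda/\br)e)=0$ and $F(P^0)\cong f\Lambda e$, and observe that the remaining terms $F(P^i)$ for $i\ge 1$ are projective over $\Lambda^*$. You simply supply more detail (the justification of $f(\Lambda/\br)e=0$ via the suitable-pair condition, the explicit citation of Proposition~\ref{FGbasic}(1) and Lemma~\ref{grFresol}) than the paper's terse two-line proof.
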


\begin{proof}
Note that $f\Lambda\otimes_{\Lambda}(\Lambda/\br)e=0$ and
that $P^0\cong \Lambda e$; so that $f\Lambda\otimes_{\Lambda}\Lambda e \cong f\Lambda e$. We
see that the result follows by tensoring the projective resolution of
$(\Lambda/\br)e$ with $f\Lambda\otimes_{\Lambda}-$.
\end{proof}

\begin{Example}{\rm
We end with a nontrivial class of examples where the hypothesis of main 
theorems of the paper hold.  
Let $K$ be a field and $\Delta$ and $\Sigma$ be finite 
dimensional $K$-algebras.
Suppose that $A$ is $K$-$\Sigma$-bimodule,  $B$ is $\Delta$-$K$-bimodule
and $C$ is a $\Delta$-$\Sigma$-bimodule.   Let
\[
\Lambda =\left(\begin{array}{ccc}K&A&0\\
0&\Sigma&0\\
B&C&\Delta
\end{array}\right),\] 
where the ring operations are given by matrix addition and multiplication.
Set $e=\left(\begin{array}{ccc}1&0&0\\0&0&0\\0
&0&0\end{array}\right)$ and
$f=\left(\begin{array}{ccc}0&0&0\\0&1&0\\0
&0&1\end{array}\right).$  Note that
$f\Lambda f=
\left(\begin{array}{cc}
\Sigma&0\\
C&\Delta
\end{array}\right)$.

The reader may verify that if $\pd_{\Delta}B<\infty$ and
$\id_{\Sigma}(A)<\infty$, then $\pd_{\Lambda}(\Lambda/\br)e <\infty$,
$\id_{\Lambda}(\Lambda/\br)e <\infty$, and $\pd_{f\Lambda f}(f\Lambda
e)<\infty$.  Note that  $C$ is an arbitrary finite dimensional bimodule. Thus if  $\pd_{\Delta}B<\infty$  and
$\id_{\Sigma}(A)<\infty$,
then, taking $G=\{\mathfrak e\}$ to be the trivial group,
  Theorem \ref{grG-to-ss-thm} and  
Corollary \ref{bigcor} hold.
}
\end{Example}

\end{document}